\newtheorem{remark}{Remark}
\newtheorem{lemma}[remark]{Lemma}
\newtheorem{theorem}[remark]{Theorem}
\newtheorem{proposition}[remark]{Proposition}
\newtheorem{corollary}[remark]{Corollary}
\newtheorem{conjecture}[remark]{Conjecture}
\title{On the super domination number of graphs}
\author{Douglas J. Klein$^{1}$, Juan A. Rodr{\'\i}guez-Vel\'azquez$^{1,2}$,
 Eunjeong Yi$^{1}$
\\
\\
$^1${\small Texas A$\&$M University at Galveston}
\\{\small  Foundational Sciences,  Galveston, TX 77553, United States}
\\
$^2${\small 
Universitat Rovira i Virgili, Departament d'Enginyeria
Inform\`{a}tica i Matem\`{a}tiques}\\
{\small
Av. Pa\"{i}sos Catalans 26, 43007
Tarragona, Spain}
\\{\small Email addresses: kleind\@@tamug.edu, juanalberto.rodriguez\@@urv.cat,yie\@@tamug.edu}
}
\begin{document}
\maketitle

\begin{abstract}
The  open neighbourhood of a vertex $v$ of a graph $G$ is the set $N(v)$ consisting of all vertices 
adjacent to $v$ in $G$.  For $D\subseteq V(G)$, we define $\overline{D}=V(G)\setminus D$. A~set  $D\subseteq V(G)$ is  called  a  super  dominating  set  of $G$      if for every vertex  $u\in \overline{D}$,  there exists $v\in D$ such that  $N(v)\cap \overline{D}=\{u\}$. The   super domination number  of $G$ is the minimum cardinality among all super dominating sets in $G$.
In this article, we obtain  closed formulas and  tight bounds for the super domination number of $G$ in terms of several invariants of $G$. Furthermore, the particular cases of corona product graphs and Cartesian product graphs are considered.
\end{abstract}

{\it Keywords: Super domination number; Domination number; Cartesian product; Corona product.}  

{\it AMS Subject Classification numbers: 05C69; 05C70  ; 05C76}

\section{Introduction}

The {\it open neighbourhood} of a vertex $v$ of a graph $G$ is the set $N(v)$ consisting of all vertices 
adjacent to $v$ in $G$.  For $D\subseteq V(G)$, we define $\overline{D}=V(G)\setminus D$. A set $D\subseteq V(G)$  is {\it dominating} in $G$ if every vertex in $\overline{D}$ 
has at
least one neighbour in $D$, \textit{i.e}., $N(u)\cap D\ne \emptyset$ for every $u\in \overline{D}$. The {\it domination number} of $G$, denoted by  
$\gamma (G)$, is the minimum cardinality among all dominating sets in $G$. A dominating set of cardinality $\gamma(G)$ is called a $\gamma(G)$-set. The reader is referred to the books \cite{Haynes1998a,Haynes1998}
 for details on domination in graphs.

 A  set  $D\subseteq V(G)$ is  called  a  \textit{super  dominating  set} of $G$   if for every vertex  $u\in \overline{D}$,  there exists $v\in D$ such that  
\begin{equation}\label{DefinitionPrivateNeighbour}
N(v)\cap \overline{D}=\{u\}.
\end{equation}
If $u$ and $v$ satisfy \eqref{DefinitionPrivateNeighbour}, then we say that $v$ is a \textit{private neighbour of $u$ with respect to} $\overline{D}$. The {\it super domination number} of $G$,  denoted by $\gamma_{\rm sp} (G)$, is the minimum cardinality among all super dominating sets in $G$. A super dominating set of cardinality $\gamma_{\rm sp}(G)$ is called a $\gamma_{\rm sp}(G)$-set. The study of super domination in graphs was introduced in \cite{MR3396565}. We recall some results on the 
  extremal values of $\gamma_{\rm sp} (G)$.

\begin{theorem}{\rm \cite{MR3396565}} 
\label{theorem1} 
Let $G$ be a graph of order $n$. The following assertions hold.
\begin{itemize}
\item $\gamma_{\rm sp}(G)=n$ if and only if $G$ is an empty graph.
\item $\gamma_{\rm sp}(G)\ge  \lceil \frac{n}{2}\rceil$.
\item $\gamma_{\rm sp}(G)=1$ if and only if $G\cong K_1$  or $G\cong K_2$.
\end{itemize}
\end{theorem}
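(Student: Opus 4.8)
The plan is to treat the three assertions in an order that lets the later ones reuse the earlier ones: first I would establish the lower bound $\gamma_{\rm sp}(G)\ge\lceil n/2\rceil$, then the characterization $\gamma_{\rm sp}(G)=n$, and finally the characterization $\gamma_{\rm sp}(G)=1$, which I would deduce from the first two.

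For the lower bound, the key observation is that the private-neighbour assignment is injective, and I expect this to be the crux of the whole theorem. Given any super dominating set $D$, the defining property produces, for each $u\in\overline{D}$, a vertex $f(u)\in D$ with $N(f(u))\cap\overline{D}=\{u\}$. If $f(u_1)=f(u_2)=v$, then $\{u_1\}=N(v)\cap\overline{D}=\{u_2\}$, forcing $u_1=u_2$; hence $f\colon\overline{D}\to D$ is injective. This yields $|\overline{D}|\le|D|$, i.e.\ $n-|D|\le|D|$, so $|D|\ge n/2$, and since $|D|$ is an integer, $|D|\ge\lceil n/2\rceil$. As this holds for every super dominating set, it holds for a minimum one, giving $\gamma_{\rm sp}(G)\ge\lceil n/2\rceil$. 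Once this injectivity argument is in place, the rest is essentially bookkeeping.

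For $\gamma_{\rm sp}(G)=n$, I would argue both directions directly from the definition. If $G$ is empty then $N(v)=\emptyset$ for every $v$, so the condition $N(v)\cap\overline{D}=\{u\}$ can never be met when $\overline{D}\ne\emptyset$; thus $V(G)$ is the only super dominating set and $\gamma_{\rm sp}(G)=n$. Conversely, if $G$ has an edge $xy$, then $D=V(G)\setminus\{x\}$ is super dominating, since here $\overline{D}=\{x\}$ and $y\in D$ satisfies $N(y)\cap\overline{D}=\{x\}$; hence $\gamma_{\rm sp}(G)\le n-1<n$. Passing to the contrapositive gives the stated equivalence.

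Finally, for $\gamma_{\rm sp}(G)=1$, the reverse implication is a one-line verification: $\{v\}$ is (vacuously) super dominating in $K_1$, and in $K_2$ either singleton is super dominating. For the forward implication I would combine the first two parts: the equality $\gamma_{\rm sp}(G)=1$ together with $\gamma_{\rm sp}(G)\ge\lceil n/2\rceil$ forces $n\le 2$, and the characterization of $\gamma_{\rm sp}(G)=n$ rules out the empty graph on two vertices. The only remaining possibilities are therefore $G\cong K_1$ and $G\cong K_2$.
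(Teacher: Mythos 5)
Your proof is correct and complete: the injectivity of the private-neighbour assignment gives the lower bound, the two characterizations follow cleanly from the definition, and the deduction of the third item from the first two is sound. The paper itself offers no proof to compare against -- Theorem \ref{theorem1} is quoted from the cited reference \cite{MR3396565} -- but your argument is the standard one for these facts and nothing is missing.
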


It is well known that for any graph $G$ without isolated vertices, $1\leq \gamma(G)\le  \lceil \frac{n}{2}\rceil $. As noticed in \cite{MR3396565}, from the theorem above  we have that for any  graph $G$ without isolated vertices,
\begin{equation}\label{TrivialBoundsontheSuperDominationNumber}
1\leq \gamma(G)\le  \left \lceil \frac{n}{2} \right\rceil \le \gamma_{\rm sp}(G)\leq n-1.
\end{equation}

Connected graphs with $\gamma_{\rm sp}(G)=\frac{n}{2}$ were characterized in \cite{MR3396565}, while all graphs with $\gamma_{\rm sp}(G)=n-1$ were characterized in 
\cite{Dettlaff-LemanskaRodrZuazua2017}.

The following examples were previously shown in \cite{MR3396565}.
\begin{enumerate}[{\rm(a)}]
\item  For a complete graph $K_n$ with $n \ge  2$, $\gamma_{\rm sp}(K_n)=n-1$.
\item For a star graph $K_{1,n-1},$ $\gamma_{\rm sp}(K_{1,n-1})=n-1$.
\item For a complete bipartite graph $K_{r,t}$ with $\min\{r,t\}\ge 2$, $\gamma_{\rm sp}(K_{r,t})=r+t-2$.
\end{enumerate}
The three cases above can be generalized as follows. Let $K_{a_1,a_2,\ldots, a_k}$ be the complete $k$-partite graph of order $n=\sum_{i=1}^{k}a_i$.  If at most one value $a_i$ is greater than one, then $\gamma_{\rm sp}(K_{a_1,a_2,\ldots, a_k})=n-1$ as in such a case $ K_{a_1,a_2,\ldots, a_k}\cong K_n$ or $K_{a_1,a_2,\ldots, a_k}\cong K_{n-a_i}+N_{a_i}$, where $G+H$ denotes the join of graphs $G$ and $H$. As shown in \cite{Dettlaff-LemanskaRodrZuazua2017}, these cases are included in the family of graphs with super domination number equal to $n-1$. On the other hand, it is not difficult to show that if there are at least two values $a_i,a_j\ge 2$, then  $\gamma_{\rm sp}(K_{a_1,a_2,\ldots, a_k})=n-2$. We leave the details to the reader. In summary, we can state the following.
\begin{equation*}
\gamma_{\rm sp}(K_{a_1,a_2,\ldots, a_k})=\left\{
\begin{array}{ll}
n-1 & \mbox{ if  at most one value } \ a_i \text{ is greater than one.}\\
n-2 & \mbox{ otherwise. }
\end{array}\right.
\end{equation*}

The particular case of paths and cycles was studied in \cite{MR3396565}.

\begin{theorem}{\rm \cite{MR3396565}}\label{Formula cycles and paths}
For any integer  $n\ge 3$,
$$\gamma_{\rm sp}(P_n)=\left\lceil\frac{n}{2} \right \rceil.$$

\begin{eqnarray*}\gamma_{\rm sp}(C_n)= \left \{ \begin{array}{ll}

\left \lceil\frac{n}{2} \right \rceil, & n\equiv 0,3\pmod 4 ;
\\
\\
\left \lceil\frac{n+1}{2} \right \rceil, & \text{otherwise}.
\end{array} \right
.\end{eqnarray*}
\end{theorem}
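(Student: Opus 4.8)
The plan is to treat both formulas by the same strategy: the universal lower bound $\gamma_{\rm sp}(G)\ge\lceil n/2\rceil$ from Theorem~\ref{theorem1} already supplies half of each claim, so the work is to (i) exhibit super dominating sets of the asserted sizes, and (ii) for cycles with $n\equiv 2\pmod 4$, sharpen the lower bound by one. Before doing either, I would record the basic \emph{injectivity} observation that drives everything: if $v$ is a private neighbour of $u$ with respect to $\overline{D}$, then $N(v)\cap\overline{D}=\{u\}$ determines $u$ from $v$, so assigning to each $u\in\overline{D}$ one of its private neighbours yields an injection $\overline{D}\hookrightarrow D$. This is precisely what gives $\lvert\overline{D}\rvert\le\lvert D\rvert$, hence the bound $\lceil n/2\rceil$.

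For $P_n$, writing $V(P_n)=\{v_1,\dots,v_n\}$ with $v_i\sim v_{i+1}$, I would take as a backbone the period-$4$ pattern placing $v_i\in D$ exactly when $i\equiv 2,3\pmod 4$ on the first $4\lfloor n/4\rfloor$ vertices, and append a short tail on the remaining $n\bmod 4$ vertices: nothing if $n\equiv 0$; put the single leftover vertex into $D$ if $n\equiv 1$; use the pattern $\overline{D}D$ if $n\equiv 2$; and use $DD\,\overline{D}$ if $n\equiv 3$. In each case the set has cardinality $\lceil n/2\rceil$, and I would verify super domination by naming, for every $u\in\overline{D}$, an adjacent $v\in D$ whose only $\overline{D}$-neighbour is $u$. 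The endpoints $v_1,v_n$ are either forced into $\overline{D}$ with their unique neighbour in $D$, or absorbed into $D$ by the tail, which is exactly why the tails are chosen as above; the rest is a finite check along the periodic block $\overline{D}DD\,\overline{D}$.

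For $C_n$, with $V(C_n)=\{v_0,\dots,v_{n-1}\}$ read cyclically, I would use analogous cyclic patterns built from the length-two blocks $DD$ and $\overline{D}\,\overline{D}$: the block $\overline{D}DD\,\overline{D}$ repeated gives a super dominating set of size $n/2$ when $n\equiv 0\pmod 4$, while inserting at the seam a single extra $D$-vertex, a pair of adjacent $D$-vertices, or the block $DD\,\overline{D}$ yields sets of sizes $(n+1)/2$, $n/2+1$, and $(n+1)/2$ for $n\equiv 1,2,3\pmod 4$, respectively. These equal $\lceil n/2\rceil$ for $n\equiv 0,3$ and $\lceil (n+1)/2\rceil$ for $n\equiv 1,2$, matching the statement, and again I would verify each by exhibiting a private neighbour for every vertex outside $D$.

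The genuinely non-routine step, and the one I expect to be the main obstacle, is the matching lower bound $\gamma_{\rm sp}(C_n)\ge n/2+1$ when $n\equiv 2\pmod 4$. Here I would argue by contradiction: if $D$ were a super dominating set with $\lvert D\rvert=n/2$, then $\lvert\overline{D}\rvert=\lvert D\rvert$ forces the injection above to be a \emph{bijection}, so every $v\in D$ satisfies $\lvert N(v)\cap\overline{D}\rvert=1$. Reading the cycle as a cyclic word over $\{D,\overline{D}\}$, this condition forbids a maximal run of consecutive $D$-vertices from having length $1$ (such a vertex would have two $\overline{D}$-neighbours) or length $\ge 3$ (an interior vertex would have none); hence every maximal $D$-run has length exactly $2$. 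Consequently $\lvert D\rvert$ is even, so $n/2$ is even and $n\equiv 0\pmod 4$, contradicting $n\equiv 2\pmod 4$. This rules out size $n/2$ and, together with the construction, pins the value at $\lceil (n+1)/2\rceil$. The one subtlety to handle with care is that the single-neighbour condition must be deduced for \emph{all} of $D$, not merely for the images of one chosen private-neighbour assignment, which is exactly where the equality $\lvert D\rvert=\lvert\overline{D}\rvert$ is indispensable.
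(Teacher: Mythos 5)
Your proposal is correct, but note that the paper does not prove this statement at all: Theorem~\ref{Formula cycles and paths} is imported verbatim from \cite{MR3396565}, so there is no in-paper argument to compare against. Your blind proof is a complete and standard one. The constructions based on the periodic block $\overline{D}DD\overline{D}$ with the various tails/seams do produce super dominating sets of the claimed sizes (I checked the seam cases and the small values $n=3,\dots,7$), and the key step --- the lower bound $\gamma_{\rm sp}(C_n)\ge \frac{n}{2}+1$ for $n\equiv 2\pmod 4$ --- is handled correctly: when $|D|=|\overline{D}|$ the private-neighbour assignment is forced to be a bijection, so every vertex of $D$ has exactly one neighbour in $\overline{D}$, which forces all maximal $D$-runs on the cycle to have length exactly $2$ and hence $|D|$ to be even, contradicting $n/2$ odd. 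You are also right to flag that the single-neighbour condition must hold for all of $D$ and not just for one chosen system of private neighbours; the bijection argument delivers exactly that. The only cosmetic remark is that for odd $n$ the two branches of the cycle formula coincide ($\lceil n/2\rceil=\lceil (n+1)/2\rceil$), so the genuinely separate cases are $n\equiv 0\pmod 4$ and $n\equiv 2\pmod 4$; your proof respects this.
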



It was shown in \cite{Dettlaff-LemanskaRodrZuazua2017}  that the problem of computing $\gamma_{\rm sp} (G)$ is NP-hard. This suggests that finding the super domination number for special classes of graphs or obtaining good bounds on this invariant is worthy of investigation. In particular, the super domination number of lexicographic product graphs   and joint graphs was studied in \cite{Dettlaff-LemanskaRodrZuazua2017} and the case of rooted product graphs was studied in \cite{MR3439855}.
 In this article we study the problem of finding exact values or sharp bounds for the super domination number of graphs. 
 
 The article is organised as follows. In Section \ref{SectionOtherParameters}, we study the relationships between $\gamma_{\rm sp}(G)$ and several parameters of $G$, including the number of twin equivalence classes, the domination number, the secure domination number, the matching number, the $2$-packing number, the vertex cover number, etc. In Section \ref{SectionCorona}, we obtain a closed formula for the super domination number of any corona graph, while in Section \ref{SectionCartesian} we study the problem of finding the exact values or sharp bounds for the super domination number of
Cartesian product graphs and express these in terms of invariants of the factor graphs. 

\section{Relationship between the super domination number and  other parameters of graphs}\label{SectionOtherParameters}

A \emph{matching}, also called an independent edge set, on a graph $G$ is a set of edges of $G$ such that no two edges share a vertex in common. A largest matching (commonly known as a maximum matching or maximum independent edge set) exists for every graph. The size of this maximum matching is called the  \emph{matching number} and is denoted  by $\alpha'(G)$.

\begin{theorem}\label{BoundMatchingNumber}
For any graph $G$ of order $n$,
$$ \gamma_{\rm sp}(G)\ge n-\alpha'(G).$$
\end{theorem}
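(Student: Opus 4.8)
The plan is to extract from a minimum super dominating set a matching that saturates its complement, and then read off the bound. First I would fix a $\gamma_{\rm sp}(G)$-set $D$, so that $|D|=\gamma_{\rm sp}(G)$ and $|\overline{D}|=n-|D|$. For each $u\in\overline{D}$ the defining property of super domination guarantees a private neighbour, so I would choose $f(u)\in D$ with $N(f(u))\cap\overline{D}=\{u\}$. Since this in particular forces $u\in N(f(u))$, each pair $\{u,f(u)\}$ is an edge of $G$, and the target is to show that the collection $M=\{\,uf(u):u\in\overline{D}\,\}$ is a matching of size $|\overline{D}|$.

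The crucial observation is that the assignment $f\colon\overline{D}\to D$ is injective. Indeed, if $f(u_1)=f(u_2)=v$ for some $u_1,u_2\in\overline{D}$, then $\{u_1\}=N(v)\cap\overline{D}=\{u_2\}$, whence $u_1=u_2$; in other words, a single vertex of $D$ can serve as the private neighbour of at most one vertex of $\overline{D}$. Granting this, the endpoints of the edges of $M$ are pairwise distinct: the vertices $u$ are distinct elements of $\overline{D}$, the vertices $f(u)$ are distinct elements of $D$ by injectivity, and no $u$ can equal any $f(u')$ because $D\cap\overline{D}=\emptyset$. Hence no two edges of $M$ share an endpoint, so $M$ is a matching with $|M|=|\overline{D}|=n-|D|$. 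Consequently $\alpha'(G)\ge|M|=n-\gamma_{\rm sp}(G)$, which rearranges to $\gamma_{\rm sp}(G)\ge n-\alpha'(G)$.

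I do not expect a genuine obstacle here: the whole argument rests on the injectivity of $f$, which is immediate, and the remaining work is just the bookkeeping confirming that the private-neighbour edges are vertex-disjoint. The one point worth stating carefully is why the $f(u)$ are distinct and disjoint from the $u$, since that is exactly what upgrades the edge set to a bona fide matching. Finally, I would note that the inequality is sharp: for the path $P_n$ one has $\alpha'(P_n)=\lfloor n/2\rfloor$, so $n-\alpha'(P_n)=\lceil n/2\rceil=\gamma_{\rm sp}(P_n)$ by Theorem~\ref{Formula cycles and paths}, and equality holds throughout.
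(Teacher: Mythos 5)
Your proof is correct and follows essentially the same route as the paper: both arguments pair each vertex of $\overline{D}$ with a private neighbour in $D$ and observe that the resulting private-neighbour edges form a matching of size $|\overline{D}|=n-\gamma_{\rm sp}(G)$. Your explicit verification that the map $u\mapsto f(u)$ is injective is exactly the (implicit) reason the paper's edge set $M$ is a matching, so no further comment is needed.
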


\begin{proof}
Let $D$ be a $\gamma_{\rm sp}(G)$-set and let $D^*\subseteq D $ be a set  of cardinality $|D^*|=|\overline{D}|$ such that for every $u\in \overline{D}$ there exists $u^*\in D^*$ such that 
$N(u^*)\cap \overline{D}=\{u\}.$
 Since $$M=\{u^*u\in E(G):\, u^*\in D^* \text{ and }u\in \overline{D}  \}$$ is a matching, we have that 
$n-\gamma_{\rm sp}(G)=|\overline{D}|=|M|\le \alpha'(G),$ as required.
\end{proof}

As a simple example of a graph where the bound above is achieved we can take $G\cong K_1+(\cup^k K_2)$. In this case $\alpha'(G)=k$, $n=2k+1$ and $\gamma_{\rm sp}(G)=k+1=n-\alpha'(G).$ Another example is $\gamma_{\rm sp}(C_{n})=\lceil\frac{n}{2}\rceil=n -\alpha'(C_{n})$ whenever $n\equiv 0,3\pmod 4 .$

 A {\it vertex cover} of  $G$ is a set $X\subset V(G)$  such that each 
edge of $G$ is incident to at least one vertex of $X$.  A minimum vertex cover 
is a vertex cover of smallest possible cardinality. The {\it vertex cover 
number} $\beta(G)$ is the cardinality of a minimum vertex cover of $G$. 

\begin{theorem}{\rm (K\"{o}nig \cite{Konig1931}, Egerv\'{a}ry \cite{Egervary1931})}\label{KonigEgervystheorem}
For bipartite graphs the size of a maximum matching equals the size of a 
minimum vertex cover.
\end{theorem}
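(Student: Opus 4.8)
The plan is to prove the two inequalities $\alpha'(G)\le \beta(G)$ and $\beta(G)\le \alpha'(G)$ separately, the first holding for every graph and the second being where bipartiteness is used. For the easy direction, fix a maximum matching $M$ and a minimum vertex cover $X$. Each edge of $M$ must be incident to some vertex of $X$, and because the edges of a matching are pairwise vertex-disjoint, distinct edges of $M$ must be covered by distinct vertices of $X$; hence $\alpha'(G)=|M|\le |X|=\beta(G)$. No structural hypothesis is needed here.

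The content of the theorem is the reverse inequality, and for this I would use K\"onig's alternating-path construction. Write the bipartition as $V(G)=A\cup B$ and let $M$ be a maximum matching. Let $U\subseteq A$ be the set of vertices of $A$ not saturated by $M$, and let $Z$ be the set of all vertices reachable from $U$ by an $M$-alternating path, that is, a path whose edges alternate between $E(G)\setminus M$ and $M$ and which begins with a non-matching edge. Define the candidate cover
\[
C=(A\setminus Z)\cup(B\cap Z).
\]
The objective is to show that $C$ is a vertex cover satisfying $|C|\le |M|$, for then $\beta(G)\le |C|\le |M|=\alpha'(G)$, which together with the first inequality forces equality.

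Two verifications complete the argument. First, $C$ covers every edge: if some edge $ab$ with $a\in A$, $b\in B$ were uncovered, then by definition of $C$ we would have $a\in Z$ and $b\notin Z$, and a short case analysis on whether $ab\in M$ shows (using the parity of alternating paths) that the alternating path reaching $a$ can be extended across $ab$ to reach $b$, forcing $b\in Z$, a contradiction. Second, $|C|\le |M|$: one checks that every vertex of $C$ is $M$-saturated and that no edge of $M$ has both endpoints in $C$, so sending each vertex of $C$ to the unique edge of $M$ saturating it is an injection into $M$. The decisive input is the maximality of $M$, which guarantees that no unsaturated vertex of $B$ lies in $Z$; otherwise such a vertex would be the far endpoint of an $M$-augmenting path, contradicting that $M$ is maximum. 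This maximality step is the point where the hypotheses genuinely enter and is the main obstacle, as every other claim follows directly from the definition of $Z$. I would remark that the max-flow/min-cut theorem or linear-programming duality for the (integral) bipartite matching polytope give alternative routes, but the alternating-path argument is the most self-contained.
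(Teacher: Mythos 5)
Your proof is correct. Note, however, that the paper does not prove this statement at all: it is imported as a classical result of K\"onig and Egerv\'ary, cited from the literature and then used as a black box to derive the bound $\gamma_{\rm sp}(G)\ge n-\beta(G)$ for bipartite graphs. So there is no in-paper argument to compare against. What you have written is the standard alternating-path proof of K\"onig's theorem: the easy inequality $\alpha'(G)\le\beta(G)$ from disjointness of matching edges, and the reverse inequality via the set $Z$ of vertices reachable from the unsaturated part of $A$ by alternating paths, with the cover $C=(A\setminus Z)\cup(B\cap Z)$. Your identification of where maximality of $M$ enters (an unsaturated vertex of $B$ in $Z$ would terminate an augmenting path) is exactly the right critical point, and the injection from $C$ into $M$ is argued correctly. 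The only presentational quibble is that you should state explicitly that $U\subseteq Z$ (each vertex of $U$ is reachable by the trivial alternating path of length zero), since the claim that every vertex of $A\setminus Z$ is $M$-saturated depends on it; this is a convention, not a gap.
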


We   use Theorems \ref{BoundMatchingNumber} and   \ref{KonigEgervystheorem} to derive the follo\-wing result. 

\begin{theorem}\label{LowerBoundSecureDomBipartiteCover}
For any bipartite graph $G$ of order $n$,
$$ \gamma_{\rm sp}(G)\ge n-\beta(G).$$
\end{theorem}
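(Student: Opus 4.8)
The plan is to chain together the two results the paper has just assembled for exactly this purpose. Theorem~\ref{BoundMatchingNumber} already supplies the lower bound $\gamma_{\rm sp}(G)\ge n-\alpha'(G)$ for \emph{every} graph $G$, so the only remaining task is to replace the matching number $\alpha'(G)$ by the vertex cover number $\beta(G)$. This is precisely where the bipartite hypothesis enters: by the K\"{o}nig--Egerv\'{a}ry theorem (Theorem~\ref{KonigEgervystheorem}), in a bipartite graph the maximum matching and minimum vertex cover have the same size, i.e.\ $\alpha'(G)=\beta(G)$.

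Concretely, I would start from the inequality of Theorem~\ref{BoundMatchingNumber}, invoke Theorem~\ref{KonigEgervystheorem} to substitute $\alpha'(G)=\beta(G)$, and conclude
$$
\gamma_{\rm sp}(G)\ge n-\alpha'(G)=n-\beta(G).
$$
There is essentially no obstacle here: the entire content of the statement is packaged into the two earlier theorems, and the argument is a single substitution. The only thing to double-check is that the hypotheses match exactly -- namely that $G$ being bipartite is enough to apply K\"{o}nig--Egerv\'{a}ry (it is, with no requirement that $G$ be connected or free of isolated vertices, since isolated vertices contribute nothing to either $\alpha'$ or $\beta$), and that Theorem~\ref{BoundMatchingNumber} is stated for arbitrary graphs of order $n$ (it is). Thus the result is a clean corollary, and I would expect the author's proof to be a one- or two-line deduction of exactly this form.
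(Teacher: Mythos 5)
Your proof is correct and is exactly the paper's argument: the paper derives this theorem by combining Theorem~\ref{BoundMatchingNumber} with the K\"onig--Egerv\'ary theorem (Theorem~\ref{KonigEgervystheorem}), substituting $\alpha'(G)=\beta(G)$ for bipartite graphs. Nothing further is needed.
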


The bound above is attained, for instance, for any star graph and for any hypercube graph. It is well-known that for the hypercube $Q_k$, $\beta(Q_k)=2^{k-1}$ (see,
for instance, \cite{MR949280}) and in Section \ref{SectionCartesian} we will show that   $\gamma_{\rm sp}(Q_k)=2^{k-1}$.

An \emph{independent set} of $G$ is a set $X \subseteq V(G)$ such that no two vertices in $X$ are adjacent in $G$, and the \emph{independence number} of $G$, $\alpha(G)$, is the cardinality of a largest independent set of $G$.

The following well-known result, due to Gallai, states the relationship between the independence number and the vertex cover number of a graph.

\begin{theorem}\label{GallaiTheorem}
{\rm (Gallai's theorem)} For any graph $G$ of order $n$,
$$\alpha(G)+\beta(G)=n.$$
\end{theorem}

From Theorems \ref{LowerBoundSecureDomBipartiteCover} and \ref{GallaiTheorem} we deduce the following tight bound.

\begin{theorem}\label{LowerBoundSecureDomBipartiteIndep}
For any bipartite graph $G$ of order $n$,
$$ \gamma_{\rm sp}(G)\ge \alpha(G).$$
\end{theorem}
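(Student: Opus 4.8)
The plan is to obtain the bound by a direct substitution, chaining together the two results immediately preceding this statement. Since $G$ is bipartite, Theorem~\ref{LowerBoundSecureDomBipartiteCover} already supplies the lower bound $\gamma_{\rm sp}(G)\ge n-\beta(G)$ in terms of the vertex cover number. The only remaining ingredient is to re-express the quantity $n-\beta(G)$ in terms of the independence number, and this is exactly what Gallai's theorem provides.

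First I would invoke Theorem~\ref{GallaiTheorem}, which holds for \emph{any} graph of order $n$ and states $\alpha(G)+\beta(G)=n$; rearranging gives $n-\beta(G)=\alpha(G)$. Then I would substitute this identity into the bound of Theorem~\ref{LowerBoundSecureDomBipartiteCover}, yielding
$$\gamma_{\rm sp}(G)\ge n-\beta(G)=\alpha(G),$$
which is precisely the claimed inequality. No additional hypotheses are needed beyond bipartiteness (required only to apply Theorem~\ref{LowerBoundSecureDomBipartiteCover}), since Gallai's theorem imposes no structural restriction on $G$.

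In truth there is no genuine obstacle here: the statement is a one-line corollary of the two named theorems, and the real content was already discharged in the proof of Theorem~\ref{LowerBoundSecureDomBipartiteCover} (via the K\"{o}nig--Egerv\'{a}ry equality between matching number and vertex cover number on bipartite graphs) together with the matching-number bound of Theorem~\ref{BoundMatchingNumber}. The one point worth a sentence of commentary is tightness: to justify the assertion that the bound is attained, I would reuse the examples already certifying Theorem~\ref{LowerBoundSecureDomBipartiteCover}, such as stars and hypercubes, for which $\beta$ (hence $\alpha=n-\beta$) is known and matches the computed value of $\gamma_{\rm sp}$.
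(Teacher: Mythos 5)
Your proposal is correct and follows exactly the paper's route: the paper derives this bound by combining Theorem~\ref{LowerBoundSecureDomBipartiteCover} with Gallai's identity $\alpha(G)+\beta(G)=n$, just as you do. Nothing further is needed.
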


A set $S\subseteq V(G)$ is said to be a \textit{secure dominating set} of $G$ if  it is a dominating set and for every $v\in \overline{S}$ there exists $u\in N(v)\cap S$ such that  $(S\setminus \{u\})\cup \{v\}$ is a dominating set.  The \emph{secure domination number}, denoted by $\gamma_s(G)$,  is the minimum cardinality among all secure dominating sets. 
This type of domination was introduced  by Cockayne et al. in \cite{MR2137919}.
\begin{theorem}
For any graph $G$,
$$\gamma_{\rm sp}(G)\ge \gamma_s(G).$$
\end{theorem}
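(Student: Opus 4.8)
The plan is to prove the stronger structural statement that \emph{every} super dominating set is a secure dominating set. The inequality then follows immediately: taking $D$ to be a $\gamma_{\rm sp}(G)$-set, this $D$ is a secure dominating set of cardinality $\gamma_{\rm sp}(G)$, whence $\gamma_s(G)\le |D|=\gamma_{\rm sp}(G)$. So I fix an arbitrary super dominating set $D$ and verify the two defining properties of a secure dominating set.

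First I would check that $D$ is dominating. Given $u\in\overline{D}$, the private-neighbour condition \eqref{DefinitionPrivateNeighbour} supplies a vertex $v\in D$ with $N(v)\cap\overline{D}=\{u\}$, so in particular $u\in N(v)$ and hence $N(u)\cap D\ne\emptyset$. Thus $D$ is a dominating set.

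Next I would verify the secure condition. Fix $u\in\overline{D}$ and let $v\in D$ be its private neighbour, so $N(v)\cap\overline{D}=\{u\}$ and $v\in N(u)\cap D$; this $v$ is the candidate vertex to be swapped out. The claim is that $D':=(D\setminus\{v\})\cup\{u\}$ is dominating, which is exactly the requirement in the definition of secure domination. Here $\overline{D'}=(\overline{D}\setminus\{u\})\cup\{v\}$. The vertex $v$ is dominated by $u\in D'$ since $u\in N(v)$. For any other vertex $x\in\overline{D}\setminus\{u\}$, domination of $D$ gives $x$ a neighbour in $D$; were its only $D$-neighbour equal to $v$, then $x\in N(v)\cap\overline{D}=\{u\}$, forcing $x=u$, a contradiction. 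Hence $x$ retains a neighbour in $D\setminus\{v\}\subseteq D'$, so $D'$ is dominating.

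The main, and essentially only, obstacle is this domination-preservation under the single swap; the crucial leverage is that the equality $N(v)\cap\overline{D}=\{u\}$ forces $v$ to be the private neighbour of no vertex of $\overline{D}$ other than $u$, so deleting $v$ can only threaten the domination of $u$, which has just entered the set. A small point worth noting is that isolated vertices cause no trouble: any isolated vertex necessarily lies in $D$, and since the swapped-out vertex $v$ is adjacent to $u$ it is never isolated, so every isolated vertex remains in $D'$. This completes the verification that $D$ is a secure dominating set, and the desired bound follows.
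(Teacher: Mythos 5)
Your proposal is correct and follows essentially the same route as the paper: take the private neighbour $v$ of each $u\in\overline{D}$ as the vertex to swap, and observe that $(D\setminus\{v\})\cup\{u\}$ remains dominating. The paper dismisses the domination-preservation step as obvious, whereas you spell out the key point that $N(v)\cap\overline{D}=\{u\}$ prevents $v$ from being the sole $D$-neighbour of any other vertex of $\overline{D}$; this is a welcome elaboration but not a different argument.
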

\begin{proof}
Let $S\subseteq V(G)$ be a $\gamma_{\rm sp}(G)$-set. For each $v\in \overline{S}$ let $v^*\in S$ such that $N(v^*)\cap \overline{S}=\{v\}$. Obviously, $S$ and $(S\setminus \{v^*\})\cup \{v\}$ are dominating sets. Therefore, $S$ is a secure dominating set and so $ \gamma_s(G)\le |S|=\gamma_{\rm sp}(G)$.
\end{proof}
The inequality above is tight. For instance, $ \gamma_s(K_{1,n-1})=\gamma_{\rm sp}(K_{1,n-1})=n-1.$ Moreover, the equality is also achieved whenever  $\gamma_{\rm sp}(G)=\gamma(G)$, since  $\gamma(G) \le \gamma_s(G)\le \gamma_{\rm sp}(G)$. This case will be discussed in Corollary \ref{CorDom_SuperdomN/2}.

The {\it  closed
neighbourhood} of a vertex  $v$ is defined to be  $N[v]=N(v)\cup \{v\}$.
We define the \textit{twin  equivalence relation} ${\cal R}$ on $V(G)$ as follows:
$$x {\cal R} y \longleftrightarrow  N[x]=N[y] \; \; \mbox{\rm or } \; N(x)=N(y).$$

We have three possibilities for each twin equivalence class $U$:

\begin{enumerate}[(a)]
\item $U$ is a singleton set, or
\item $|U|>1$ and  $N(x)=N(y)$ for any $x,y\in U$
, or
\item  $|U|>1$ and   $N[x]=N[y]$ for any $x,y\in U$. 
\end{enumerate}

We will  refer to the types (a) (b) and (c) classes as the \textit{singleton, false and true twin  equivalence classes}, respectively.

Let us see three different examples of non-singleton equivalence classes. An example of a graph where every equivalence class is a true twin equivalence class is 
$K_r+(K_s\cup K_t)$, $r,s,t\ge 2$. In this case, there are three equivalence
classes composed of $r,s$ and $t$  true twin vertices, respectively. 
As an example where every class is composed of false twin vertices, we take the complete bipartite graph $K_{r,s}$, $r,s\ge 2$. 
Finally, the graph $K_r+N_s$,  $r,s\ge 2$, has two equivalence classes and one of them is composed of $r$ true twin vertices and the other one is composed of $s$ false twin vertices. 
On the other hand, $K_1+(K_r\cup N_s)$, $r,s\ge 2$, is an example where one class is singleton, one class is composed of true twin vertices and the other one is composed of false twin vertices.


The following straightforward lemma will be very useful to prove our next theorem.

\begin{lemma}\label{LemmaTwins}
Let $G$ be a graph and $D$  a $\gamma_{\rm sp}(G)$-set. Let $D^*\subseteq D $ such that $|D^*|=|\overline{D}|$  and for every $u\in \overline{D}$ there exists $u^*\in D^*$ such that 
$N(u^*)\cap \overline{D}=\{u\}.$ If $U\subseteq V(G)$ is a twin equivalence class, then $|U\cap \overline{D}|\le 1$  and $|U\cap D^*|\le 1$.
\end{lemma}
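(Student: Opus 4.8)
The plan is to argue both inequalities by contradiction, exploiting the bijective correspondence $u\mapsto u^*$ between $\overline{D}$ and $D^*$ that the hypotheses $|D^*|=|\overline{D}|$ and $N(u^*)\cap\overline{D}=\{u\}$ furnish. Singleton classes are trivial, since then $|U\cap\overline{D}|\le|U|=1$ and $|U\cap D^*|\le 1$ automatically, so only the false and true twin classes require work. The crux will be a single observation about twins that I would isolate first: if $x,y\in U$ are distinct members of a twin equivalence class and $w$ is a vertex adjacent to $x$ with $w\ne y$, then $w$ is also adjacent to $y$. Indeed, if $U$ is a false twin class then $N(x)=N(y)$ and adjacency of $w$ to $x$ immediately gives $w\in N(y)$; if $U$ is a true twin class then $N[x]=N[y]$, so $w\in N(x)\subseteq N[y]$, and since $w\ne y$ this forces $w\in N(y)$. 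This unified statement covers both non-singleton types at once and is exactly where the definition of the relation $\mathcal{R}$ does its work.

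For the bound $|U\cap\overline{D}|\le 1$, I would suppose there are two distinct vertices $u_1,u_2\in U\cap\overline{D}$. Taking the private neighbour $u_1^*\in D^*$ of $u_1$, I note that $u_1^*$ is adjacent to $u_1$ and that $u_1^*\ne u_2$ because $u_1^*\in D$ while $u_2\in\overline{D}$. Applying the twin observation with $x=u_1$, $y=u_2$, $w=u_1^*$ yields that $u_1^*$ is adjacent to $u_2$, so $u_2\in N(u_1^*)\cap\overline{D}$; since $u_2\ne u_1$ this contradicts $N(u_1^*)\cap\overline{D}=\{u_1\}$.

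For the bound $|U\cap D^*|\le 1$, I would suppose there are two distinct vertices $u_1^*,u_2^*\in U\cap D^*$, with corresponding private partners $u_1,u_2\in\overline{D}$; bijectivity of $u\mapsto u^*$ forces $u_1\ne u_2$. Here $u_1^*$ is adjacent to $u_1$, and $u_1\ne u_2^*$ because $u_1\in\overline{D}$ while $u_2^*\in D$. Applying the twin observation this time with $x=u_1^*$, $y=u_2^*$, $w=u_1$ gives that $u_1$ is adjacent to $u_2^*$, hence $u_1\in N(u_2^*)\cap\overline{D}=\{u_2\}$, so $u_1=u_2$, a contradiction.

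The only real subtlety --- and the step I would treat most carefully --- is keeping straight which vertices lie in $D$ and which in $\overline{D}$, since the condition $w\ne y$ in the twin observation is verified each time precisely by this membership disjointness rather than by any adjacency argument. The true-twin case is the one that genuinely needs $w\ne y$, so I would double-check the inequalities $u_1^*\ne u_2$ and $u_1\ne u_2^*$ explicitly; everything else is a routine substitution.
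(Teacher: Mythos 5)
Your proof is correct. The paper states Lemma \ref{LemmaTwins} without any proof, describing it only as ``straightforward,'' so there is no argument to compare against; your unified twin observation together with the injectivity/bijectivity of the correspondence $u\mapsto u^*$ (which is what lets you assert that every vertex of $D^*$ is the private neighbour of a unique vertex of $\overline{D}$ in the second part) is precisely the routine verification the authors left to the reader, and both contradictions go through.
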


\begin{theorem}\label{ThTwins}
For any  graph $G$  of order $n$ having $t$ twin equivalence classes,
$$\gamma_{\rm sp}(G)\ge n-t.$$
Furthermore, if $G$ is connected and $t\ge 3$, then $$\gamma_{\rm sp}(G)\ge n-t+1.$$
\end{theorem}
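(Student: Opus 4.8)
The plan is to obtain both inequalities from Lemma \ref{LemmaTwins}, reading the first as a direct counting consequence and the second as a rigidity statement that cannot survive connectivity. Fix a $\gamma_{\rm sp}(G)$-set $D$ together with the subset $D^*$ provided by Lemma \ref{LemmaTwins}, and let $U_1,\dots,U_t$ be the twin equivalence classes. For the first inequality, since these classes partition $V(G)$ and Lemma \ref{LemmaTwins} gives $|U_i\cap\overline{D}|\le 1$ for every $i$, summing over the classes yields $|\overline{D}|\le t$, that is $n-\gamma_{\rm sp}(G)\le t$, which is exactly $\gamma_{\rm sp}(G)\ge n-t$.

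For the sharper bound I would argue by contradiction, assuming $\gamma_{\rm sp}(G)=n-t$, equivalently $|\overline{D}|=t$. Combined with the two inequalities $|U_i\cap\overline{D}|\le 1$ and $|U_i\cap D^*|\le 1$ from Lemma \ref{LemmaTwins} and with $|D^*|=|\overline{D}|=t$, this forces each class $U_i$ to contain exactly one vertex $u_i\in\overline{D}$ and exactly one vertex $w_i\in D^*$; as $D^*\subseteq D$ these are distinct, so no class is a singleton. The private-neighbour assignment $u\mapsto u^*$ is injective because $u^*$ determines $u$ through $N(u^*)\cap\overline{D}=\{u\}$, hence it is a bijection $\overline{D}\to D^*$, which I would record as a permutation $\sigma$ of $\{1,\dots,t\}$ via $u_i^*=w_{\sigma(i)}$, so that $N(w_j)\cap\overline{D}=\{u_{\sigma^{-1}(j)}\}$ for all $j$.

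The core of the proof is to pass to the quotient graph on twin classes. First I would prove uniform adjacency between distinct classes: any two vertices of a single class agree on all their neighbours lying outside the class (this holds whether the pair are false twins, where $N(x)=N(y)$, or true twins, where $N[x]=N[y]$), so for two distinct classes either every vertex of one is adjacent to every vertex of the other, or there are no edges between them. Next I would determine the quotient's edges exactly: for $i\ne j$ there is an edge between $U_i$ and $U_j$ iff $u_i\sim w_j$, and the identity $N(w_j)\cap\overline{D}=\{u_{\sigma^{-1}(j)}\}$ shows $u_i\sim w_j$ iff $j=\sigma(i)$. Since the quotient is an undirected graph, the relation ``$j=\sigma(i)$'' must be symmetric on distinct pairs, which forces $\sigma=\sigma^{-1}$, an involution; consequently each class has at most one quotient-neighbour (namely $\sigma(i)$ when $\sigma(i)\ne i$), so the quotient is a disjoint union of isolated vertices (fixed points of $\sigma$) and isolated edges (transpositions of $\sigma$).

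Finally, since $G$ is connected its twin quotient is connected, and a connected graph of maximum degree at most one has at most two vertices; hence $t\le 2$, contradicting $t\ge 3$. Therefore $|\overline{D}|\le t-1$ and $\gamma_{\rm sp}(G)\ge n-t+1$. I expect the main obstacle to be the middle step: showing that the between-class adjacency is completely governed by $\sigma$ and that $\sigma$ must be an involution, since this is exactly where the private-neighbour condition and the twin structure have to be combined with care. Once the quotient is pinned down as a union of copies of $K_1$ and $K_2$, connectivity together with $t\ge 3$ closes the argument immediately.
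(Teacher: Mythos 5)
Your proposal is correct and follows essentially the same route as the paper: the first bound is the same counting via Lemma \ref{LemmaTwins}, and for the second you assume equality, force each class to contain exactly one vertex of $\overline{D}$ and one of $D^*$, and combine the all-or-nothing adjacency between twin classes with connectivity to reach the same contradiction --- a class adjacent to two other classes would have its $D^*$-vertex seeing two vertices of $\overline{D}$. Your permutation/involution packaging of this as ``the twin quotient has maximum degree at most one'' is just a more explicit rendering of the paper's direct argument (which simply picks a class $B_j$ adjacent to $B_i$ and $B_l$ and exhibits the two forbidden private neighbours), and both are sound.
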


\begin{proof}
Let $\{B_1,B_2,\dots ,B_t\}$ be the set of twin equivalence classes of $G$ and let $D$ be a $\gamma_{\rm sp}(G)$-set. 
By Lemma \ref{LemmaTwins} for every twin equivalence class we have  $|B_i\cap D|\ge |B_i|-1$, which implies that $\gamma_{\rm sp}(G)=|D|\ge \sum_{i=1}^t|B_i|-t=n-t$.

Suppose that $\gamma_{\rm sp}(G)=n-t$. In such a case, by Lemma \ref{LemmaTwins} we have that $|B_i\cap \overline{D}|= 1$  and $|B_i\cap D^*|= 1$ for every twin equivalence class.  From now on we assume that $G$ is connected and $t\ge 3$. Hence, there exist three twin equivalence classes $B_i,B_j,B_l$  such that every vertex in $B_j$ is adjacent to every vertex in $B_i\cup B_l$, 
 and six vertices $a,b\in B_i$, $x,y\in B_j$ and $u,v\in B_l$ such that $a,x,u\in \overline{D}$ and $b,y,v\in D^*$. Thus, $N(y)\cap \overline{D}\supseteq \{a,u\}$, which is a contradiction. Therefore, the result follows.
\end{proof}

The bound $\gamma_{\rm sp}(G)\ge n-t$ is achieved by complete nontrivial graphs $G\cong K_n$, complete bipartite graphs $G\cong K_{r,s}$, where $r,s\ge 2$, and by the disjoin union of these graphs. The bound $\gamma_{\rm sp}(G)\ge n-t+1$ is achieved by the graph $G$ shown in Figure \ref{FigureTwins}, where there are four false twin equivalence classes and a singleton equivalence class. In this case, white-coloured vertices form a $\gamma_{\rm sp}(G)$-set.

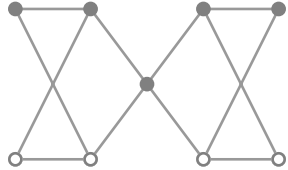
\begin{figure}[htb]
\begin{center}
\begin{tikzpicture}
[line width=1pt, scale=1]

\coordinate (V1) at (0.75,-1);
\coordinate (V2) at (1.75,-1);
\coordinate (V3) at (0.75,1);
\coordinate (V4) at (1.75,1);
\coordinate (V5) at (0,0);
\coordinate (V6) at (-0.75,1);
\coordinate (V7) at (-1.75,-1);
\coordinate (V8) at (-0.75,-1);
\coordinate (V9) at (-1.75,1);

\draw[black!40]  (V1)--(V2)--(V3)--(V4)--(V1)--(V5)--(V6)--(V9)--(V8)--(V7)--(V6);
\draw[black!40]  (V3)--(V5)--(V8);

\foreach \number in {1,...,9}{
\filldraw[gray]  (V\number) circle (0.08cm);
}

\foreach \number in {1,2,7,8}{
\filldraw[white]  (V\number) circle (0.08cm);
\draw[gray]  (V\number) circle (0.08cm);
}

\end{tikzpicture}
\end{center}
\vspace{-0,4cm}
\caption{A graph with $5$ twin equivalence classes where $\gamma_{\rm sp}(G)=n-4$.}
\label{FigureTwins} 
\end{figure}

The {\it open  neighbourhood} of a set $X\subseteq V(G)$ is defined to be  $N(X)=\cup_{x\in X}N(x)$, while the {\it closed neighbourhood} is defined to be $N[X]=X\cup N(X)$.
A set $S\subseteq V(G)$ is \textit{open irredundant} if for every $u\in S$, 
\begin{equation}\label{DefOpenIrredundant}
N(u)\setminus N[S\setminus\{u\}]\ne  \emptyset.
\end{equation}

\begin{theorem}{\rm \cite{Bollobas1979(c)}}\label{Teoerem:minimum dominating set which is open irredundant}
If a graph $G$ has no isolated vertices, then $G$ has a minimum dominating set which is open irredundant. 
\end{theorem}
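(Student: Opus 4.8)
The plan is to produce the required set by an extremal (potential-function) argument among all $\gamma(G)$-sets, combined with a single-vertex swap. The guiding idea is that a minimum dominating set is automatically minimal, so its only possible way to violate \eqref{DefOpenIrredundant} comes from vertices that are isolated inside $D$; minimizing the number of such vertices should eliminate them altogether.

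First I would record the reduction. Let $D$ be a $\gamma(G)$-set; being of minimum cardinality it is minimal, so for each $u\in D$ the set $D\setminus\{u\}$ fails to dominate, that is, some vertex lies in $N[u]\setminus N[D\setminus\{u\}]$. Suppose $u$ violates \eqref{DefOpenIrredundant}, so $N(u)\setminus N[D\setminus\{u\}]=\emptyset$. Then the only possibly undominated vertex is $u$ itself, giving $u\notin N[D\setminus\{u\}]$ and hence $N(u)\cap D=\emptyset$. Thus every vertex of $D$ that breaks open irredundance is isolated within $D$.

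Next I would introduce the potential $\iota(D)=|\{u\in D:N(u)\cap D=\emptyset\}|$ and choose $D$ to be a $\gamma(G)$-set minimizing $\iota(D)$. Assume for contradiction that $D$ is not open irredundant and fix a violating vertex $u$; by the reduction $N(u)\cap D=\emptyset$, and since $G$ has no isolated vertices $u$ has a neighbour $w$, which must lie in $\overline{D}$. The hypothesis $N(u)\setminus N[D\setminus\{u\}]=\emptyset$ forces every neighbour of $u$, and in particular $w$, to have a neighbour in $D\setminus\{u\}$. Put $D'=(D\setminus\{u\})\cup\{w\}$. I would then verify that $D'$ is again a $\gamma(G)$-set: it dominates because $u$ is now dominated by $w\in D'$, while any other vertex whose only neighbour in $D$ was $u$ would lie in $N(u)\setminus N[D\setminus\{u\}]=\emptyset$, and $|D'|=|D|$ since $w\notin D$. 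Finally $\iota(D')\le\iota(D)-1$: deleting $u$ creates no new isolated vertex (as $u$ had no neighbour in $D$), inserting $w$ can only destroy isolation, and $w$ itself is non-isolated in $D'$ because it is adjacent to a vertex of $D\setminus\{u\}$. This contradicts the minimality of $\iota(D)$, so the chosen $D$ is open irredundant.

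The step demanding the most care is the bookkeeping of the swap, where two separate consequences of the single hypothesis $N(u)\setminus N[D\setminus\{u\}]=\emptyset$ must be used simultaneously: that $u$ is isolated within $D$ (needed so that removing $u$ neither strands any other vertex of $D$ nor places $w$ inside $D$), and that each neighbour of $u$ is already dominated from $D\setminus\{u\}$ (needed so that $D'$ still dominates). Once these are extracted as independent facts the verifications are routine, and the extremal choice of $\iota(D)$ closes the argument.
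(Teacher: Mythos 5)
The paper does not prove this statement; it is quoted from Bollob\'as and Cockayne \cite{Bollobas1979(c)} and used as a black box in the proof of Theorem \ref{ThSuper-dom-order}, so there is no internal proof to compare against. Your argument is correct and complete: the reduction showing that any vertex of a minimum dominating set violating \eqref{DefOpenIrredundant} must be isolated in $G[D]$, followed by the extremal choice of a $\gamma(G)$-set minimizing the number of such isolated vertices and the swap $D'=(D\setminus\{u\})\cup\{w\}$, is exactly the classical extremal argument for this result, and all the bookkeeping (that $D'$ still dominates, that $|D'|=|D|$, and that $\iota(D')\le\iota(D)-1$) is verified correctly.
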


\begin{theorem}\label{ThSuper-dom-order}
If a graph $G$ has no isolated vertices, then $$\gamma_{\rm sp}(G)\le n-\gamma(G).$$
\end{theorem}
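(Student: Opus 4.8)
The plan is to construct a super dominating set of the required size directly from the open irredundant minimum dominating set supplied by Theorem~\ref{Teoerem:minimum dominating set which is open irredundant}. Since $G$ has no isolated vertices, that theorem applies and yields a dominating set $S$ with $|S|=\gamma(G)$ that is open irredundant. The natural candidate for a super dominating set is its complement $D=\overline{S}$, which already has exactly the target cardinality $|D|=n-\gamma(G)$; it then remains only to verify that $D$ satisfies the defining condition~\eqref{DefinitionPrivateNeighbour}.

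To do this, I would note that $\overline{D}=S$, so the condition to check is that every $u\in S$ admits a private neighbour $v\in\overline{S}$ with $N(v)\cap S=\{u\}$. The key step is to read off such a $v$ from open irredundancy. Fixing $u\in S$, condition~\eqref{DefOpenIrredundant} provides a vertex $w\in N(u)\setminus N[S\setminus\{u\}]$. First I would argue that $w\in\overline{S}$: indeed $w\in N(u)$ forces $w\ne u$, while $w\notin N[S\setminus\{u\}]\supseteq S\setminus\{u\}$ rules out $w\in S\setminus\{u\}$, so $w\notin S$. Next I would show $N(w)\cap S=\{u\}$: on one hand $u\in N(w)$ by symmetry of adjacency, and on the other hand, if some $s\in S\setminus\{u\}$ were adjacent to $w$, then $w\in N(S\setminus\{u\})\subseteq N[S\setminus\{u\}]$, contradicting the choice of $w$. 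Hence $w$ is precisely a private neighbour of $u$ with respect to $\overline{D}$.

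Carrying this out for every $u\in S$ shows that $D=\overline{S}$ is a super dominating set, whence $\gamma_{\rm sp}(G)\le|D|=n-\gamma(G)$, as claimed. The only genuinely delicate point is the bookkeeping around the set $N[S\setminus\{u\}]$: one must keep clear that it contains both $S\setminus\{u\}$ itself and all neighbours of those vertices, since the first inclusion is what places $w$ outside $S$ and the second is what guarantees $w$ has no neighbour in $S$ other than $u$. Everything else is immediate once Theorem~\ref{Teoerem:minimum dominating set which is open irredundant} is invoked; in particular, the hypothesis that $G$ has no isolated vertices enters the argument only through the applicability of that theorem.
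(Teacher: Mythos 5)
Your proposal is correct and follows exactly the paper's own argument: invoke the existence of an open irredundant minimum dominating set $S$, and verify that $\overline{S}$ is a super dominating set by extracting, for each $u\in S$, a private neighbour from condition~\eqref{DefOpenIrredundant}. The only difference is that you spell out the routine verification that the witness $w$ lies in $\overline{S}$ and satisfies $N(w)\cap S=\{u\}$, which the paper leaves implicit.
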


\begin{proof}
If $G$ has no isolated vertices, by Theorem \ref{Teoerem:minimum dominating set which is open irredundant}, there exists an open irredundant set $S\subseteq V(G)$ such that $|S|=\gamma(G)$. Since every   $u\in S$ satisfies \eqref{DefOpenIrredundant}, we have that for every $u\in S$, there exits $v\in \overline{S}$ such that $N(v)\cap S=\{u\}$, which implies that $\overline{S}$ is a super dominating set. Therefore, $\gamma_{\rm sp}(G)\le |\overline{S}|= n-\gamma(G).$
\end{proof}


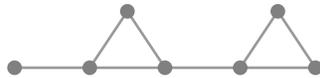
\begin{figure}[htb]
\begin{center}
\begin{tikzpicture}
[line width=1pt, scale=1]

\coordinate (V1) at (-2,0);
\coordinate (V2) at (-1,0);
\coordinate (V3) at (0,0);
\coordinate (V4) at (1,0);
\coordinate (V5) at (2,0);
\coordinate (V6) at (-0.5,0.75);
\coordinate (V7) at (1.5,0.75);

\draw[black!40]  (V1)--(V2)--(V3)--(V4)--(V5);
\draw[black!40]  (V2)--(V6)--(V3);
\draw[black!40]  (V4)--(V7)--(V5);

\foreach \number in {1,...,7}{
\filldraw[gray]  (V\number) circle (0.08cm);
}

\end{tikzpicture}
\end{center}
\vspace{-0,4cm}
\caption{For this graph $\gamma_{\rm sp}(G)=n-2=5$.}
\label{figMaxDegree} 
\end{figure}

The bound above is tight. For instance, it is achieved by the graph shown in Figure \ref{figMaxDegree}.
In Section  \ref{SectionCorona} we will show other examples of graphs where the bound above is achieved.

Since $\gamma_{sp}(G)\ge \frac{n}{2}$, we deduce the following consequence of  Theorem   \ref{ThSuper-dom-order}.

\begin{corollary}\label{CorDom_SuperdomN/2}
Let $G$ be a graph of order $n$.
If $\gamma(G)= \frac{n}{2}$, then  $\gamma_{sp}(G)= \frac{n}{2}$.
\end{corollary}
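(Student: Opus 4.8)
The proof is a two-sided squeeze: the plan is to trap $\gamma_{\rm sp}(G)$ between a lower bound and an upper bound that both equal $\frac{n}{2}$. For the lower bound I would use the universal inequality $\gamma_{\rm sp}(G)\ge \lceil \frac{n}{2}\rceil \ge \frac{n}{2}$ from Theorem \ref{theorem1}, which holds for \emph{every} graph of order $n$ with no further hypotheses. For the matching upper bound I would feed the hypothesis $\gamma(G)=\frac{n}{2}$ into Theorem \ref{ThSuper-dom-order}, obtaining $\gamma_{\rm sp}(G)\le n-\gamma(G)=n-\frac{n}{2}=\frac{n}{2}$. Putting the two together gives $\frac{n}{2}\le \gamma_{\rm sp}(G)\le \frac{n}{2}$, hence the claimed equality.

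The step I expect to be the genuine obstacle is the legitimacy of invoking Theorem \ref{ThSuper-dom-order}, since that theorem is stated only for graphs \emph{without isolated vertices}. So before running the squeeze I would make the no-isolated-vertices hypothesis explicit (it is already implicit in the bound chain \eqref{TrivialBoundsontheSuperDominationNumber}). This is not a cosmetic restriction: the condition $\gamma(G)=\frac{n}{2}$ is exactly the extremal case of the well-known bound $\gamma(G)\le \lceil \frac{n}{2}\rceil$ for isolated-free graphs, so the natural setting is precisely that regime. Moreover the conclusion really can fail otherwise; for instance $G\cong P_3\cup K_1$ has $n=4$ and $\gamma(G)=2=\frac{n}{2}$, yet the isolated vertex (which can never be a private neighbour and so lies in every super dominating set) forces $\gamma_{\rm sp}(G)=3\ne \frac{n}{2}$. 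Hence the cleanest write-up records ``$G$ has no isolated vertices'' and then carries out the one-line argument above.
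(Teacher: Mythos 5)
Your proof is exactly the paper's argument: the corollary is stated there as an immediate consequence of Theorem \ref{ThSuper-dom-order} together with the lower bound $\gamma_{\rm sp}(G)\ge \frac{n}{2}$ from Theorem \ref{theorem1}, so the two-sided squeeze you describe is precisely what the authors intend. Your additional observation that the no-isolated-vertices hypothesis must be carried over from Theorem \ref{ThSuper-dom-order} (with the witness $P_3\cup K_1$) is correct and in fact sharpens the statement as printed, which omits that hypothesis.
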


The converse of Corollary \ref{CorDom_SuperdomN/2} is not true. For instance, as we will see in Section \ref{SectionCartesian}, for the Cartesian product of $K_n$ times $K_2$ we have $\gamma_{sp}(K_n\Box K_2)=n$, while $\gamma(K_n\Box K_2)=2$.

A set $X\subseteq V(G)$ is  called a $2$-\textit{packing}
if $N[u]\cap N[v]=\emptyset $ for every pair of different vertices $u,v\in X$.
 The
$2$-\textit{packing number} $\rho(G)$ is the   cardinality
of any   largest $2$-packing of
$G$.  
It is well known that for any graph $G$, $\gamma(G)\ge \rho(G)$.
Meir and Moon \cite{MR0401519} showed in 1975 that $\gamma(T)= \rho(T)$ for any tree $T$.  We remark that in general, these $\gamma(T)$-sets and $\rho(T)$-sets  are not identical.

\begin{corollary}\label{CoroSup,packing}
Let  $G$  be a graph of order $n$. If $G$ does not have isolated vertices,
$$ \gamma_{\rm sp}(G)\le n-\rho(G).$$
\end{corollary}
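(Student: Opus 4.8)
The plan is to derive this as an immediate consequence of Theorem \ref{ThSuper-dom-order} together with the standard inequality relating the domination number and the $2$-packing number, which is stated in the paragraph introducing $\rho(G)$.

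First I would invoke the hypothesis that $G$ has no isolated vertices, which is exactly what is needed to apply Theorem \ref{ThSuper-dom-order}. That theorem yields
$$\gamma_{\rm sp}(G)\le n-\gamma(G).$$
Next I would use the well-known fact, recalled in the text, that $\gamma(G)\ge \rho(G)$ for every graph $G$. This holds because any $2$-packing is in particular a set whose closed neighbourhoods are pairwise disjoint, so a minimum dominating set must contain at least one vertex in each such closed neighbourhood, forcing $\gamma(G)\ge\rho(G)$. Consequently $-\gamma(G)\le -\rho(G)$, and substituting into the previous inequality gives
$$\gamma_{\rm sp}(G)\le n-\gamma(G)\le n-\rho(G),$$
which is the claimed bound.

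There is essentially no obstacle here; the result is a one-line chaining of an already-proved theorem with a textbook inequality. The only point requiring the no-isolated-vertex hypothesis is the appeal to Theorem \ref{ThSuper-dom-order}, so I would make sure that hypothesis is explicitly cited. If the authors wish, the corollary could also be stated with a remark that equality $\gamma_{\rm sp}(G)=n-\rho(G)$ is of interest precisely when both intermediate inequalities are tight, i.e.\ when $\gamma(G)=\rho(G)$ (as for trees, by the Meir--Moon result) and simultaneously $\gamma_{\rm sp}(G)=n-\gamma(G)$; but this is not needed for the proof itself.
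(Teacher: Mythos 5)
Your proposal is correct and matches the paper's (implicit) derivation exactly: the corollary is obtained by chaining Theorem \ref{ThSuper-dom-order} with the standard inequality $\gamma(G)\ge\rho(G)$ recalled in the preceding paragraph. Nothing is missing, and your remark about when equality holds is consistent with the paper's subsequent examples.
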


 To show that the bound above is tight, we can take the graph shown in Figure \ref{figMaxDegree}. In Section  \ref{SectionCorona} we will show other examples.
 
 
As shown in \cite{Walikar1979}, the domination number of any graph of maximum degree $\Delta$ is bounded below by $\frac{n}{\Delta+1}$. Therefore, the following result is deduced from Theorem \ref{ThSuper-dom-order}.

\begin{corollary}\label{SecuredominationVS2-packing}
For any graph  $G$  of order $n$ and maximum degree $\Delta$, $$\gamma_{\rm sp}(G)\le \left\lfloor \frac{n\Delta}{\Delta +1} \right\rfloor.$$
\end{corollary}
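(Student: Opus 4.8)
The plan is to chain together the two facts recalled in the paragraph immediately preceding the statement. First I would apply Theorem \ref{ThSuper-dom-order} to a graph $G$ without isolated vertices, obtaining $\gamma_{\rm sp}(G)\le n-\gamma(G)$. Then I would insert the lower bound $\gamma(G)\ge \frac{n}{\Delta+1}$ from \cite{Walikar1979}; because $\gamma(G)$ enters the previous inequality with a minus sign, this lower bound translates directly into an upper bound:
$$\gamma_{\rm sp}(G)\le n-\gamma(G)\le n-\frac{n}{\Delta+1}=\frac{n(\Delta+1)-n}{\Delta+1}=\frac{n\Delta}{\Delta+1}.$$

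The only remaining step is to replace the real-valued right-hand side by its floor. Since $\gamma_{\rm sp}(G)$ is an integer and is bounded above by the real number $\frac{n\Delta}{\Delta+1}$, it is automatically bounded above by the greatest integer not exceeding that quantity, namely $\left\lfloor \frac{n\Delta}{\Delta+1}\right\rfloor$. This yields exactly the asserted inequality.

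I expect no genuine difficulty here, as the whole argument is a single substitution followed by an integrality observation, and both inequalities combine in the same direction. The one point that calls for care is the scope of Theorem \ref{ThSuper-dom-order}, which requires $G$ to have no isolated vertices; accordingly I would read the corollary under that hypothesis, consistent with the standing assumption behind the bounds in \eqref{TrivialBoundsontheSuperDominationNumber}. For graphs with isolated vertices this reading is necessary in any case, since then $\frac{n\Delta}{\Delta+1}$ need not even dominate $\frac{n}{2}$, so the bound could not hold in the presence of isolated vertices (e.g.\ for the empty graph, where $\gamma_{\rm sp}(G)=n$).
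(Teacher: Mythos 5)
Your argument is exactly the paper's: the corollary is stated as a direct consequence of Theorem \ref{ThSuper-dom-order} combined with the bound $\gamma(G)\ge \frac{n}{\Delta+1}$ of Walikar et al., and your substitution plus the integrality step is the intended deduction. Your observation that the statement must implicitly assume $G$ has no isolated vertices (since Theorem \ref{ThSuper-dom-order} requires it and the bound genuinely fails otherwise, e.g.\ for $K_2\cup K_1$) is a valid and worthwhile caveat that the paper leaves unstated.
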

The bound above is achieved, for instance, for any  graph with $\gamma_{\rm sp}(G)=n-1$, as in theses cases $\Delta=n-1$. An example of graph with $\Delta <n-1$ and $\gamma_{\rm sp}(G)=\left\lfloor \frac{n\Delta}{\Delta +1} \right\rfloor$ is the one shown in Figure \ref{figMaxDegree}.

By Theorems \ref{LowerBoundSecureDomBipartiteCover} and \ref{ThSuper-dom-order} we deduce the following result.

\begin{theorem}
Let $G$ be a bipartite graph. If $\gamma(G)=\beta(G)$, then 
$$\gamma_{\rm sp}(G)=\alpha(G).$$
\end{theorem}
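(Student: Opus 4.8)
The plan is to trap $\gamma_{\rm sp}(G)$ between $\alpha(G)$ on both sides, using the two cited theorems to supply matching lower and upper bounds and Gallai's identity (Theorem \ref{GallaiTheorem}) to pass between $\beta(G)$ and $\alpha(G)$.

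First I would establish the lower bound. Since $G$ is bipartite, Theorem \ref{LowerBoundSecureDomBipartiteCover} gives $\gamma_{\rm sp}(G)\ge n-\beta(G)$, and rewriting $n-\beta(G)=\alpha(G)$ via Gallai's theorem yields $\gamma_{\rm sp}(G)\ge \alpha(G)$. This is exactly the content of Theorem \ref{LowerBoundSecureDomBipartiteIndep}, so this half requires no independent work.

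Next I would establish the matching upper bound. Theorem \ref{ThSuper-dom-order} gives $\gamma_{\rm sp}(G)\le n-\gamma(G)$. Invoking the hypothesis $\gamma(G)=\beta(G)$ and then Gallai's theorem once more, $n-\gamma(G)=n-\beta(G)=\alpha(G)$, so $\gamma_{\rm sp}(G)\le \alpha(G)$. Combining the two inequalities forces $\gamma_{\rm sp}(G)=\alpha(G)$, and the whole argument is a two-line sandwich.

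The one point requiring care --- and the main obstacle --- is that Theorem \ref{ThSuper-dom-order} carries the standing hypothesis that $G$ has no isolated vertices, a condition the statement does not make explicit. This hypothesis is genuinely needed for the upper bound: an isolated vertex must belong to every super dominating set, so it inflates $\gamma_{\rm sp}(G)$, and the inequality $\gamma_{\rm sp}(G)\le n-\gamma(G)$ can then fail even when $\gamma(G)=\beta(G)$ still holds. I would therefore either add ``$G$ has no isolated vertices'' to the hypotheses, as is standard throughout this section, or restrict attention to that case; once that condition is in force the sandwich above completes the proof with nothing further to check.
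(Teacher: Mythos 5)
Your proof is correct and is exactly the paper's argument: the theorem is stated there as a direct consequence of Theorems \ref{LowerBoundSecureDomBipartiteCover} and \ref{ThSuper-dom-order} combined via Gallai's identity, precisely your two-line sandwich. Your added observation that the no-isolated-vertices hypothesis of Theorem \ref{ThSuper-dom-order} is tacitly required is a legitimate point of care that the paper glosses over (indeed, $C_6\cup K_1$ satisfies $\gamma=\beta=3$ but has $\gamma_{\rm sp}=5>4=\alpha$), so the hypothesis should be made explicit as you suggest.
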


Theorem \ref{SuperDominationStarTimeStar} will provide a family of Cartesian product graphs where $\gamma_{\rm sp}(G)=\alpha(G).$

The \emph{line graph} $L(G)$  of a simple non-empty graph $G$ is obtained by associating a vertex with each edge of $G$ and connecting two vertices of $L(G)$ with an edge if and only if the corresponding edges of $G$ have a vertex in common. 

\begin{theorem}\label{BoundPackingLine}
For any graph $G$ of order $n$, 
$$ \gamma_{\rm sp}(G)\le n-\rho(L(G)).$$
\end{theorem}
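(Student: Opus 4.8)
The plan is to translate the $2$-packing condition in $L(G)$ into a purely structural statement about $G$, and then build an explicit super dominating set from it. First I would recall that the vertices of $L(G)$ are the edges of $G$, two of them being adjacent exactly when the corresponding edges share an endpoint. A set $X$ of edges of $G$ is a $2$-packing of $L(G)$ precisely when, for every pair of distinct edges $e,f\in X$, no edge of $G$ is incident with both $e$ and $f$. Unpacking $N[e]\cap N[f]=\emptyset$ in $L(G)$: having a shared endpoint, or having a third edge adjacent to both, are exactly the cases $e,f$ are at distance $1$ or $2$, so the $2$-packing condition forces $e$ and $f$ to be vertex-disjoint \emph{and} to admit no edge of $G$ joining an endpoint of $e$ to an endpoint of $f$. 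In other words, a $2$-packing of $L(G)$ is exactly an \emph{induced matching} of $G$, whence $\rho(L(G))$ equals the maximum size of an induced matching in $G$.

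Next I would fix a maximum induced matching $M=\{a_1b_1,\ldots,a_kb_k\}$ of $G$, where $k=\rho(L(G))$, and set
$$D=V(G)\setminus\{b_1,\ldots,b_k\},\qquad \overline{D}=\{b_1,\ldots,b_k\}.$$
Since $M$ is a matching, its $2k$ endpoints are distinct, so $|\overline{D}|=k$ and every $a_i$ lies in $D$. I would then claim that $D$ is a super dominating set, with $a_i$ serving as the private neighbour of $b_i$ with respect to $\overline{D}$.

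The verification is the crux, and it hinges on the induced (rather than merely ordinary) matching property. I must show $N(a_i)\cap\overline{D}=\{b_i\}$ for every $i$. Certainly $b_i\in N(a_i)$, since $a_ib_i$ is an edge of $M$. For $j\ne i$, the vertex $b_j$ cannot lie in $N(a_i)$: an edge $a_ib_j$ would join the endpoint $a_i$ of the matching edge $a_ib_i$ to the endpoint $b_j$ of the \emph{distinct} matching edge $a_jb_j$, contradicting the induced matching property established in the first step. Hence $N(a_i)\cap\{b_1,\ldots,b_k\}=\{b_i\}$, so $D$ is super dominating and $\gamma_{\rm sp}(G)\le|D|=n-k=n-\rho(L(G))$. (No assumption about isolated vertices is needed: such vertices simply sit in $D$, and if $G$ has no edges then $k=0$ and the bound reads $\gamma_{\rm sp}(G)\le n$, consistent with Theorem~\ref{theorem1}.)

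I expect the main obstacle to be the first step rather than the construction, which is then routine. Concretely, one must be careful that $N[e]\cap N[f]=\emptyset$ in $L(G)$ rules out not only a shared endpoint of $e$ and $f$ but also any edge of $G$ connecting their endpoint sets; conflating the $2$-packing number of $L(G)$ with an ordinary matching number would break the private-neighbour argument, since an edge $a_ib_j$ between two matched edges would leave $b_j$ in $N(a_i)\cap\overline{D}$. Once the identification with the induced matching number is pinned down, selecting one endpoint per matched edge for $\overline{D}$ and reading off the private neighbours completes the proof.
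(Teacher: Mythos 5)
Your proof is correct and follows essentially the same route as the paper: the paper also takes a maximum $2$-packing $M$ of $L(G)$, splits its endpoints into two sets $X$ and $X'$ (your $\{b_i\}$ and $\{a_i\}$), and shows $V(G)\setminus X$ is super dominating with each vertex of $X'$ acting as a private neighbour. Your explicit identification of $2$-packings of $L(G)$ with induced matchings of $G$ just makes transparent the step the paper states tersely as $N(u)\cap X'\cap N(w)=\emptyset$ for $u,w\in X$.
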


\begin{proof}
Let $M$ be a $2$-packing of $L(G)$ such that  $|M|=\rho(L(G))$. Let $X,X'\subset V(G)$ be two disjoint sets of cardinality $|M|$ such that $|e\cap  X|=1$ and $|e\cap  X'|=1$  for every $e\in M$, \emph{i.e.}, every edge in $M$ has an endpoint in $X$ and the other one  in $X'$. Since $M$ is  a $2$-packing of $L(G)$, for any $u,w\in X$ we have $N(v)\cap X'\cap N(w)=\emptyset$. Thus, $V(G)\setminus X$ is a super dominating set of $G$, as for every $u\in X$ there exists $v\in X'$ such that $N(v)\cap X=\{u\}$. Hence, 
$$\gamma_{\rm sp}(G)\le |V(G)\setminus X|= n-|X|=n-|M|=n-\rho(L(G)),$$
as required.
\end{proof}

To show that the bound above is tight we can take, for instance, both graphs shown in Figure \ref{fig:g9}. Notice that in these cases Theorem \ref{BoundPackingLine} gives a better result than Corollary \ref{CoroSup,packing}.

\section{Super domination in Corona product graphs}\label{SectionCorona}
The \emph{corona product graph} $G_1\odot G_2$ is defined as 
the graph obtained from $G_1$ and $G_2$ by taking one copy of $G_1$ and $|V(G_1)|$ copies of $G_2$ and joining by an edge each vertex from the $i^{th}$ copy of $G_2$ with the $i^{th}$ vertex of $G_1$ \cite{Frucht1970}. It is readily seen that $\gamma(G_1\odot G_2)=\rho(G_1\odot G_2)=|V(G_1)|$.  Therefore, The  bounds obtained in Theorem  \ref{ThSuper-dom-order} and Corollary \ref{CoroSup,packing} are tight, as for  the corona graph $G\cong G_1\odot K_r$ or $G\cong G_1\odot N_r$, where $G_1$ is an arbitrary graph of order $t$ and  $\gamma(G)=\rho(G)=t$.

\begin{theorem}
For any graph $G$ of order $n$ and any nonempty graph $H,$
$$\gamma_{\rm sp}(G\odot H)=n(\gamma_{\rm sp}(H)+1).$$
Furthermore, for any integer $r\ge 1$,
$$\gamma_{\rm sp}(G\odot N_r)=nr.$$
\end{theorem}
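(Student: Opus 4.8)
The plan is to prove both equalities by matching upper and lower bounds, analysing the corona copy by copy. Write $V(G)=\{v_1,\dots,v_n\}$, let $H_1,\dots,H_n$ be the $n$ copies of $H$ with $m=|V(H)|$, and recall that in $G\odot H$ the vertex $v_i$ is adjacent to all of $V(H_i)$ together with its $G$-neighbours, while each $w\in V(H_i)$ has neighbourhood $N_{H_i}(w)\cup\{v_i\}$; the order of $G\odot H$ is $n(m+1)$. For the upper bound of the first formula I would, for each $i$, choose a $\gamma_{\rm sp}(H)$-set $D_i\subseteq V(H_i)$ and set $D=\bigcup_{i=1}^n(\{v_i\}\cup D_i)$, so that $|D|=n(\gamma_{\rm sp}(H)+1)$ and $\overline{D}=\bigcup_i\overline{D_i}$ with $\overline{D_i}=V(H_i)\setminus D_i$. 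Given $u\in\overline{D_i}$, super domination inside $H_i$ yields $w\in D_i$ with $N_{H_i}(w)\cap\overline{D_i}=\{u\}$; since $v_i\in D$, this gives $N(w)\cap\overline{D}=N_{H_i}(w)\cap\overline{D_i}=\{u\}$, so $D$ is super dominating and $\gamma_{\rm sp}(G\odot H)\le n(\gamma_{\rm sp}(H)+1)$.

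For the lower bound, let $D$ be a $\gamma_{\rm sp}(G\odot H)$-set, put $D_i=D\cap V(H_i)$, $a_i=|\overline{D}\cap V(H_i)|$, and $\epsilon_i=1$ if $v_i\in\overline{D}$ and $0$ otherwise. I would prove the local inequality $a_i+\epsilon_i\le m-\gamma_{\rm sp}(H)$ for every $i$; summing then gives $|\overline{D}|\le n(m-\gamma_{\rm sp}(H))$, hence $|D|\ge n(\gamma_{\rm sp}(H)+1)$. The two needed facts come from tracking private neighbours. First, if $v_i\in\overline{D}$ then $\overline{D}\cap V(H_i)=\emptyset$: a vertex $u\in\overline{D}\cap V(H_i)$ would need a private neighbour $w\in D$ adjacent to it, forcing $w\in N_{H_i}(u)\cap D_i$, but then $w$ is also adjacent to $v_i\in\overline{D}$, so $\{u,v_i\}\subseteq N(w)\cap\overline{D}$, a contradiction. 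Thus $a_i=0$ and $a_i+\epsilon_i=1\le m-\gamma_{\rm sp}(H)$, the last step using that $H$ nonempty forces $\gamma_{\rm sp}(H)\le m-1$ by Theorem \ref{theorem1}. Second, if $v_i\in D$ and $\overline{D_i}\neq\emptyset$, then either some $u\in\overline{D_i}$ has private neighbour $v_i$ (which forces $N(v_i)\cap\overline{D}=\{u\}$, hence $\overline{D}\cap V(H_i)=\{u\}$ and $a_i=1$), or every such private neighbour lies inside $H_i$, in which case $D_i$ super-dominates $H_i$ exactly as in the upper-bound computation and $a_i=m-|D_i|\le m-\gamma_{\rm sp}(H)$. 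The remaining case $\overline{D_i}=\emptyset$ is trivial, so $a_i+\epsilon_i\le m-\gamma_{\rm sp}(H)$ always, completing the first formula.

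For the empty case $H=N_r$ the set above is no longer optimal, so I would argue separately, noting that each of the $nr$ pendant vertices $h_{i,j}$ has $N(h_{i,j})=\{v_i\}$. Taking $D$ to be all pendants leaves $\overline{D}=\{v_1,\dots,v_n\}$, and $h_{i,1}$ is a private neighbour of $v_i$, so $\gamma_{\rm sp}(G\odot N_r)\le nr$. For the reverse, let $D$ be optimal: if some pendant of copy $i$ lies in $\overline{D}$, its unique neighbour $v_i$ must be its private neighbour, so $v_i\in D$ and $N(v_i)\cap\overline{D}=\{h_{i,j}\}$ drives all other pendants of copy $i$ into $D$. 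Hence per copy at most one of "$v_i\in\overline{D}$" and "a pendant lies in $\overline{D}$" occurs, i.e. $|\overline{D}\cap(\{v_i\}\cup V(H_i))|\le 1$; summing gives $|\overline{D}|\le n$, so $|D|\ge n(r+1)-n=nr$ and equality follows.

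The main obstacle will be the lower-bound case analysis for the first formula: the delicate point is the dichotomy that a vertex $v_i$ left outside $D$ sterilises its entire copy (no copy-vertex can be outside $D$), whereas a vertex $v_i$ kept inside $D$ forces $D_i$ to super-dominate $H_i$, and pinning down precisely where nonemptiness of $H$ enters through $\gamma_{\rm sp}(H)\le m-1$ is what separates the two formulas.
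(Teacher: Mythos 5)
Your proof is correct. The upper bound for $G\odot H$ is exactly the paper's construction (all of $V(G)$ plus a $\gamma_{\rm sp}(H)$-set in each copy). For the lower bound you diverge slightly: the paper first normalizes an optimal set $W$ by an exchange argument (if $v_i\notin W$ then the whole copy $V(H_i)$ must lie in $W$, so one may swap in $\{v_i\}\cup Y_i$ without increasing the size) and only then argues copy by copy, whereas you skip the normalization and prove the local inequality $a_i+\epsilon_i\le m-\gamma_{\rm sp}(H)$ directly for an arbitrary optimal set, handling the case $v_i\in\overline{D}$ and the case where $v_i$ itself serves as a private neighbour explicitly. Your version is a bit longer but arguably cleaner, since it makes explicit the dichotomy that the paper compresses into the one-line claim that $W_i$ ``is a super dominating set of $H$.'' For $G\odot N_r$ the routes genuinely differ: the paper gets the result in one line by sandwiching between the matching-number lower bound $\gamma_{\rm sp}\ge N-\alpha'$ and the domination-number upper bound $\gamma_{\rm sp}\le N-\gamma$ (both equal to $nr$ here), while you give a self-contained pendant-by-pendant argument. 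Both are valid; the paper's is shorter given its earlier machinery, yours is more elementary and localizes exactly where the emptiness of $N_r$ changes the answer.
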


\begin{proof}We first assume that $H$ is nonempty. Let $r$ be the order of $H$, $U=\{u_1,\dots , u_n\}$  the vertex set of $G$, and $V_i$ the vertex set of the copy of $H$ associated to  $u_i$. Let $Y\subset V(H)$ be a $\gamma_{\rm sp}(H)$-set and $Y_i\subset V_i$ the set associated to $Y$ in the $i^{th}$ copy of $H$. 
It is readily seen that $U\cup \left(\cup_{i=1}^nY_i\right)$ is a super dominating set of $G\odot H$. Thus, $$\gamma_{\rm sp}(G\odot H)\le \displaystyle
\left|U\cup \left(\bigcup_{i=1}^nY_i\right)\right|=n+\sum_{i=1}^n|Y_i|=n(1+\gamma_{\rm sp}(H)).$$

Now, let $W$ be a $\gamma_{\rm sp}(G\odot H)$-set. If $u_i\not\in W$, for some $i$, then $V_i\subseteq W$, which implies that $W'=(W\setminus V_i)\cup Y_i\cup\{u_i\}$ is a super dominating set of $G\odot H$
and $|W'|\le |W|$, as $|Y_i|=\gamma_{\rm sp}(H)\le r-1$. Thus, from now on we can assume that $W$ is taken in such a way that $U\subset W$. Now, let $W_i=W\cap V_i$. If there exists $u_i\in U$ such that $|W_i|< |Y_i|$, then set of vertices of $H$ associated to $W_i$ is a super dominating set of $H$ and $|W_i|<\gamma_{\rm sp} (H)$, which is a contradiction. Hence, $|W_i|\ge |Y_i|=\gamma_{\rm sp}(H)$ for every $i$, which implies that
$$\gamma_{\rm sp}(G\odot H)=|W|=|U|+ \displaystyle
\sum_{i=1}^n|W_i|\ge n(1+ \gamma_{\rm sp}(H)).$$
Therefore, the first equality holds.

On the other hand, Theorems \ref{BoundMatchingNumber} and \ref{ThSuper-dom-order} imply that $\gamma_{\rm sp}(G\odot N_r)= nr$.
\end{proof}

An alternative proof for the result above can be derived from a formula obtained in \cite{MR3439855} for the super domination number of rooted product graphs. We leave the details to the reader.

\section{Super domination in Cartesian product graphs} \label{SectionCartesian}

The \textit{Cartesian product} of two graphs $G$ and $H$ is the graph $G\Box H$ whose vertex set is $V(G\Box H)=V(G)\times V(H)$ and two vertices $(g,h),(g',h')\in V(G\Box H)$ are adjacent in $G\Box H$ if and only if either

\begin{itemize}
\item $g=g'$ and $hh'\in E(H)$, or
\item $gg'\in E(G)$  and $h=h'$.
\end{itemize}

The Cartesian product is a straightforward and natural construction, and is in many respects the simplest graph product \cite{Hammack2011,Imrich2000}.  Hamming graphs, which includes hypercubes,  grid graphs and torus graphs are some particular cases of this product. The \emph{Hamming graph} $H_{k,n}$ is the Cartesian product of $k$ copies of the complete graph $K_n$. 
\emph{Hypercube} $Q_n$ is defined as $H_{n,2}$. Moreover, the \emph{grid graph} $P_k\Box P_n$ is the Cartesian product of the paths $P_k$ and $P_n$, the \emph{cylinder graph} $C_k\Box P_n$ is the Cartesian product of the cycle $C_k$ and the path $P_n$, and the \emph{torus graph} $C_k\Box C_n$ is the Cartesian product of the cycles $C_k$ and $C_n$.

This operation is commutative  in the sense that $G\Box H \cong H\Box G$, and is also associative in the sense  that  $(F \Box G)\Box H\cong F\Box (G\Box H)$. A Cartesian product  graph is connected if and only if both of its factors are connected.

This product has been extensively investigated from various perspectives. For instance, the most popular open problem in the area of domination theory is known as Vizing's conjecture. Vizing \cite{Vizing1968} suggested that the domination number of the Cartesian product of two graphs is at least as large as the product of domination numbers of its factors. Several researchers have worked on it, for instance, some partial results appears in \cite{Bresar2012,Hammack2011}.  For more information on structure and properties of the Cartesian product of graphs we refer the reader to \cite{Hammack2011,Imrich2000}.

Before obtaining our first result we need to introduce some additional notation. The set of all $\gamma_{\rm sp}(G)$-sets will be denoted by $\mathcal{S}(G)$.
For any $S\in \mathcal{S}(G)$   we define the set $\mathcal{P}(S)$ formed by subsets $S^*\subseteq S$ of cardinality $|S^*|=|\overline{S}|$ such that for every $u\in \overline{S}$ there exists $u^*\in S^*$ such that 
$N(u^*)\cap \overline{S}=\{u\}.$ With this notation in mind we define the following parameter which will be useful to study the super domination number of Cartesian product graphs.
$$
\lambda(G)=\max_{S\in \mathcal{S}(G),S^*\in\mathcal{P}(S)}\{|X|:\, X\subseteq S \text{ and } N(X) \cap (\overline{S}\cup S^*)=\emptyset\}.
$$

\begin{figure}[htb]
\begin{center}
\begin{tikzpicture}
[line width=1pt, scale=1]

\coordinate (V1) at (-2.5,1);
\coordinate (V2) at (-1.5,1);
\coordinate (V3) at (-0.5,1);
\coordinate (V4) at (0.5,1);
\coordinate (V5) at (1.5,1);
\coordinate (V6) at (2.5,1);
\coordinate (V7) at (0,0);

\draw[black!40]  (V1)--(V2);
\draw[black!40]  (V4)--(V3);

\foreach \number in {1,...,6}{
\draw[black!40]  (V\number)--(V7);  
}
\foreach \number in {1,...,7}{
\filldraw[gray]  (V\number) circle (0.08cm);
}

\foreach \number in {1,...,6}{
\node [gray,above] at (V\number) {$\number $};
\node [gray,below] at (V7) {$7$};
}
\end{tikzpicture}
\hspace{1cm}
\begin{tikzpicture}
[line width=1pt, scale=1]

\coordinate (V1) at (-1,1);
\coordinate (V2) at (0,1);
\coordinate (V3) at (1,1);
\coordinate (V4) at (0,0);

\draw[black!40]  (V1)--(V2);

\foreach \number in {1,...,3}{
\draw[black!40]  (V\number)--(V4);  
}
\foreach \number in {1,...,4}{
\filldraw[gray]  (V\number) circle (0.08cm);
}

\foreach \number in {1,...,3}{
\node [gray,above] at (V\number) {$\number $};
\node [gray,below] at (V4) {$4$};
}
\end{tikzpicture}
\end{center}
\caption{For the graph $G_1\cong K_1+(K_2\cup K_2\cup K_1\cup K_1)$ (on the left)  we have $\lambda(G_1)=2$, while  for the graph   $G_2\cong K_1+(K_2\cup  K_1)$ (on the right) we have   $\lambda(G_2)=1$.}
\label{fig:g9} 
\end{figure}
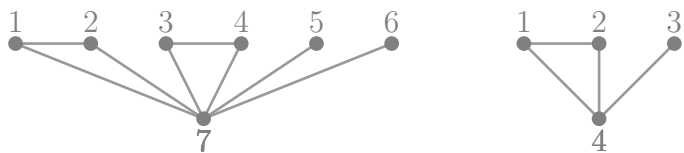

With the aim of clarifying what this notation means, we consider the graphs shown in Figure \ref{fig:g9}. For the graph $G_1\cong K_1+(K_2\cup K_2\cup K_1\cup K_1)$ (on the left)  we have $\gamma_{\rm sp}(G_1)=5$, $S_1=\{1,3,5,6,7\}\in \mathcal{S}(G_1)$  and $\mathcal{P}(S_1)=\{\{1,3\} \}$, while for the graph   $G_2\cong K_1+(K_2\cup  K_1)$ (on the right) we have that $\gamma_{\rm sp}(G_2)=3$, $S_2=\{1,3,4 \}\in \mathcal{S}(G_2)$  and $\mathcal{P}(S_2)=\{\{1\},\{4\}\}$. Notice that  $\lambda(G_1)=2$ and $\lambda(G_2)=1$.

If $G$ has $n$ vertices and $|N(v)|=n-1$, then $v$ is a \textit{universal vertex} of $G$. It is readily seen that the following remark holds.
\begin{remark}\label{RemarkUniversalVertex1}
Let $v$ be a universal vertex of a graph $G$ of order $n$ and let $S\in \mathcal{S}(G)$. If $v\in \overline{S}\cup S^*$ for some $S^*\in \mathcal{P}(S)$, then 
$\gamma_{\rm sp}(G)=n-1.$
\end{remark}

For instance, for the graph $G_2\cong K_1+(K_2\cup  K_1)$ shown in Figure  \ref{fig:g9} we have $\gamma_{\rm sp}(G_2)=n-1=3$,
$S=\{1,3,4\}\in \mathcal{S}(G_2)$ and  $S^*=\{4\}\in \mathcal{P}(S)$.

\begin{theorem}\label{UpperBopundGTimesH}
For any  graphs $G$  and $H$ of order $n\ge 2$ and  $n'\ge 2$, respectively,
$$ \left \lceil \frac{nn'}{2} \right\rceil \le \gamma_{\rm sp}(G\Box H)\le  n'\gamma_{\rm sp}(G)-\lambda(G)(n'-\gamma_{\rm sp}(H)).$$
\end{theorem}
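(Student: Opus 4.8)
The lower bound is immediate and I would dispose of it first: since $G\Box H$ has order $nn'$, the second item of Theorem~\ref{theorem1} gives $\gamma_{\rm sp}(G\Box H)\ge\lceil nn'/2\rceil$. The real content is the upper bound, which I would establish by exhibiting an explicit super dominating set of the prescribed cardinality.

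The plan is to fix data realizing $\lambda(G)$: a $\gamma_{\rm sp}(G)$-set $S$, an associated $S^*\in\mathcal{P}(S)$, and a set $X\subseteq S$ with $N(X)\cap(\overline{S}\cup S^*)=\emptyset$ and $|X|=\lambda(G)$; and separately a $\gamma_{\rm sp}(H)$-set $T$ together with some $T^*\in\mathcal{P}(T)$. I would then propose the candidate set
$$ D=\bigl(S\times V(H)\bigr)\setminus\bigl(X\times\overline{T}\bigr). $$
Because $X\subseteq S$ the removed block sits inside $S\times V(H)$, so $|D|=n'|S|-|X|\,|\overline{T}|=n'\gamma_{\rm sp}(G)-\lambda(G)\bigl(n'-\gamma_{\rm sp}(H)\bigr)$, matching the target exactly. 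Its complement decomposes as $\overline{D}=\bigl(\overline{S}\times V(H)\bigr)\cup\bigl(X\times\overline{T}\bigr)$, and the whole task reduces to producing a private neighbour for each vertex of these two blocks.

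For a vertex $(u,h)$ with $u\in\overline{S}$, I would take $u^*\in S^*$ with $N_G(u^*)\cap\overline{S}=\{u\}$ and claim that $(u^*,h)$ is its private neighbour. For a vertex $(g,h)$ with $g\in X$ and $h\in\overline{T}$, I would instead take $h^*\in T^*$ with $N_H(h^*)\cap\overline{T}=\{h\}$ and claim that $(g,h^*)$ serves as its private neighbour. In both cases the verification is a direct inspection of the adjacency rule of $G\Box H$: the $G$-neighbours of the candidate lie in its own $H$-coordinate column, and its $H$-neighbours lie in its own $G$-coordinate row, so one intersects each family with $\overline{D}$ and checks that exactly the intended singleton survives.

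The main obstacle is controlling these intersections so that no \emph{spurious} neighbour lands in $\overline{D}$, and this is precisely where the defining condition of $\lambda(G)$ is used. I expect three consequences of $N(X)\cap(\overline{S}\cup S^*)=\emptyset$ to carry the argument: first, $X\cap S^*=\emptyset$ (any element of $S^*$ has a neighbour in $\overline{S}$, whereas vertices of $X$ do not), which guarantees each candidate $(u^*,h)$ genuinely lies in $D$ and has no $H$-neighbour in the block $X\times\overline{T}$; second, $N_G(u^*)\cap X=\emptyset$, so no $G$-neighbour of $u^*$ falls into $X\times\overline{T}\subseteq\overline{D}$; and third, $N_G(g)\cap\overline{S}=\emptyset$ for $g\in X$, so in the column $h^*$ no $G$-neighbour of $g$ lies in $\overline{S}\times V(H)$. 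Granting these, the only surviving neighbours are $(u,h)$ and $(g,h)$ respectively, so $D$ is super dominating and the bound follows.
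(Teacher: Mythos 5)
Your proposal is correct and takes essentially the same approach as the paper: your set $D=(S\times V(H))\setminus(X\times\overline{T})$ coincides with the paper's $W=V(G\Box H)\setminus\left((\overline{S}\times V(H))\cup(X\times\overline{S'})\right)$, and the two-case assignment of private neighbours $(u^*,h)$ and $(g,h^*)$ is exactly the paper's. You even spell out the facts $X\cap S^*=\emptyset$ and $N(u^*)\cap X=\emptyset$ that the paper uses only tacitly.
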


\begin{proof}The lower bound is deduced from Theorem \ref{theorem1}, so we proceed to deduce the upper bound.
Let $S$ be a $\gamma_{\rm sp}(G)$-set, $S^*\in\mathcal{P}(S)$ and $X\subseteq S$ such that $|X|=\lambda(G)$ and $N(X) \cap (\overline{S}\cup S^*)=\emptyset$. We claim that for any $\gamma_{\rm sp}(H)$-set $S'$, the set 
$$W=V(G\Box H)\setminus \left(\left(\overline{S}\times V(H)\right) \cup \left( X\times \overline{S'}\right)\right)$$
 is a super dominating set of $G\Box H$. To see this we fix $(x,y)\in \overline{W}$. Notice that $x\in \overline{S}$ or $x\in X$, so that we differentiate these two cases. 
 
 Case 1: $x\in \overline{S}$. In this case, there exists $x^*\in S^*$ such that $N(x^*)\cap \overline{S}=\{x\}$. Since $\{x^*\}\times N(y)\subseteq W$, $(N(x^*)\times \{y\})\cap \overline{W}=(N(x^*)\cap \overline{S})\times \{y\}=\{(x,y)\}$ and $$N(x^*,y)=\left( \{x^*\}\times N(y)  \right)  \cup \left(N(x^*)\times \{y\}   \right),$$ 
we can conclude that $N(x^*,y) \cap \overline{W}=\{(x,y)\}.$ 

Case 2:  $x\in X$. In this case $N(x)\cap (\overline{S}\cup S^* )=\emptyset$ and $y\in \overline{S'}$. Since $S'$ is a super dominating set of $H$, there exists $y'\in S'$ such that $N(y')\cap \overline{S'}=\{y\}$. Also, if there exists $w\in N(x)\cap X$, then $S\setminus\{w\}$ is a super dominating set of $G$, which is a contradiction, so that $X$ is an independent set.
Hence, 
$$N(x,y')\cap \overline{W}=N(x,y')\cap (X\times \overline{S'})=\{x\}\times(N(y')\cap \overline{S'})=\{(x,y)\}.$$   
Therefore, $W$ is a super dominating set of $G\Box H$, which implies that $$\gamma_{\rm sp}(G\Box H)\le |W|=nn'-n'|\overline{S}|-|X\times \overline{S'}|=n'\gamma_{\rm sp}(G)-\lambda(G)(n'-\gamma_{\rm sp}(H)),$$ 
as required.
\end{proof}

As a direct consequence of Theorem \ref{UpperBopundGTimesH} we derive the following bound.
\begin{corollary}\label{InitialBound}
For any  graphs $G$  and $H$ of order $n\ge 2$ and  $n'\ge 2$, respectively,
$$\gamma_{\rm sp}(G\Box H)\le 
\min\{n'\gamma_{\rm sp}(G),n\gamma_{\rm sp}(H)\}.$$
\end{corollary}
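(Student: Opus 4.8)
The plan is to obtain both terms of the minimum directly from Theorem \ref{UpperBopundGTimesH} together with the commutativity of the Cartesian product. The first term is essentially immediate. Starting from the upper bound
$$\gamma_{\rm sp}(G\Box H)\le n'\gamma_{\rm sp}(G)-\lambda(G)\bigl(n'-\gamma_{\rm sp}(H)\bigr),$$
I would observe that the subtracted quantity $\lambda(G)(n'-\gamma_{\rm sp}(H))$ is nonnegative. For this it suffices to check two facts: first, $\lambda(G)\ge 0$, which holds because the empty set $X=\emptyset$ trivially satisfies the defining condition $N(X)\cap(\overline{S}\cup S^*)=\emptyset$, so $\lambda(G)$ is a maximum over a family that contains the value $0$; and second, $n'\ge \gamma_{\rm sp}(H)$, which is simply the trivial upper bound on the super domination number of a graph of order $n'$ (indeed $\gamma_{\rm sp}(H)\le n'$ always, and $\gamma_{\rm sp}(H)\le n'-1$ when $H$ has no isolated vertices, by \eqref{TrivialBoundsontheSuperDominationNumber}). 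Dropping the nonnegative subtracted term then yields $\gamma_{\rm sp}(G\Box H)\le n'\gamma_{\rm sp}(G)$.

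For the second term I would exploit the stated commutativity $G\Box H\cong H\Box G$, which in particular gives $\gamma_{\rm sp}(G\Box H)=\gamma_{\rm sp}(H\Box G)$. Applying Theorem \ref{UpperBopundGTimesH} with the roles of the two factors interchanged produces
$$\gamma_{\rm sp}(H\Box G)\le n\gamma_{\rm sp}(H)-\lambda(H)\bigl(n-\gamma_{\rm sp}(G)\bigr),$$
and by exactly the same nonnegativity argument (now $\lambda(H)\ge 0$ and $n\ge\gamma_{\rm sp}(G)$) the subtracted term may be discarded, giving $\gamma_{\rm sp}(G\Box H)\le n\gamma_{\rm sp}(H)$. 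Combining the two inequalities gives the claimed bound by the minimum.

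There is no substantial obstacle here, since the statement is flagged as a direct consequence of Theorem \ref{UpperBopundGTimesH}; the entire content is the two elementary nonnegativity checks and the appeal to commutativity. The only point that requires a moment of care, and the one I would be sure to state explicitly, is the verification that $\lambda(G)\ge 0$ via the admissible choice $X=\emptyset$, so that the $\lambda$-term in the bound genuinely acts as a reduction rather than potentially reversing the inequality.
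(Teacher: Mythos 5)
Your proof is correct and matches the paper's intent exactly: the paper states this corollary as a ``direct consequence'' of Theorem \ref{UpperBopundGTimesH} with no written proof, and the intended argument is precisely your two observations --- that $\lambda(G)\ge 0$ (via $X=\emptyset$) together with $\gamma_{\rm sp}(H)\le n'$ makes the subtracted term nonnegative, and that commutativity of $\Box$ yields the symmetric bound. Your explicit verification of $\lambda(G)\ge 0$ is a worthwhile detail the paper leaves implicit.
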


We will see in Theorem \ref{UpperBopund2} that if $n\ge 3$, then $\gamma_{\rm sp}(K_n\Box K_3)=2n$, which implies that the bound given in Corollary \ref{InitialBound} is tight, as $\min\{3(n-1),2n\}=2n$ for $n\ge 3$.

The following result is a direct consequence of Theorem \ref{UpperBopundGTimesH} and Corolla\-ry~\ref{InitialBound}.

\begin{theorem}\label{CorollaryHalfTheOrder}
Let   $G$  and $H$ be two graphs of order $n\ge 2$ and  $n'\ge 2$, respectively. If $\gamma_{\rm sp}(G)=\frac{n}{2}$ or 
$\gamma_{\rm sp}(H)=\frac{n'}{2}$, then $$ \gamma_{\rm sp}(G\Box H)=\frac{nn'}{2}.$$
\end{theorem}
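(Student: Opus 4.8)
The plan is to establish the result by a straightforward squeeze argument, bracketing $\gamma_{\rm sp}(G\Box H)$ between the lower bound of Theorem~\ref{UpperBopundGTimesH} and the upper bound of Corollary~\ref{InitialBound}. Since the Cartesian product is commutative, $G\Box H\cong H\Box G$, so I may assume without loss of generality that the hypothesis is realised by the first factor, that is $\gamma_{\rm sp}(G)=\frac{n}{2}$; the case $\gamma_{\rm sp}(H)=\frac{n'}{2}$ then follows by symmetry.

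First I would record an integrality observation. Because $\gamma_{\rm sp}(G)$ is an integer, the equality $\gamma_{\rm sp}(G)=\frac{n}{2}$ forces $n$ to be even. Hence $\frac{nn'}{2}=\frac{n}{2}\cdot n'$ is itself an integer, and so $\lceil \frac{nn'}{2}\rceil=\frac{nn'}{2}$. Feeding this into the lower bound of Theorem~\ref{UpperBopundGTimesH} gives $\gamma_{\rm sp}(G\Box H)\ge \frac{nn'}{2}$.

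Next I would extract the matching upper bound. Corollary~\ref{InitialBound} yields $\gamma_{\rm sp}(G\Box H)\le n'\gamma_{\rm sp}(G)=n'\cdot\frac{n}{2}=\frac{nn'}{2}$. Combining the two inequalities gives $\frac{nn'}{2}\le \gamma_{\rm sp}(G\Box H)\le \frac{nn'}{2}$, whence $\gamma_{\rm sp}(G\Box H)=\frac{nn'}{2}$, as claimed.

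Since the entire argument is a squeeze between two previously established bounds, there is no substantive obstacle. The only points meriting a moment's attention are the reduction to a single factor via commutativity and the integrality remark that legitimises dropping the ceiling in the lower bound; once the parity of $n$ is noted, both the lower and upper bounds already coincide with $\frac{nn'}{2}$, so no further estimation is required.
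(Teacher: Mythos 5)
Your proof is correct and follows exactly the route the paper intends: the paper presents this theorem as a direct consequence of the lower bound in Theorem~\ref{UpperBopundGTimesH} and the upper bound in Corollary~\ref{InitialBound}, which is precisely your squeeze. Your integrality remark (that $\gamma_{\rm sp}(G)=\frac{n}{2}$ forces $n$ even, so the ceiling can be dropped) is a detail the paper leaves implicit, and your write-up supplies it correctly.
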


From the result above we have that for any graph $G$ of order $n\ge 2$, $$\gamma_{\rm sp}(G\Box K_2)=n.$$
Since the hypercube graph $Q_k$ is defined as $Q_k=Q_{k-1}\Box K_2$, for  $k\ge 2$, and $Q_1=K_2$, Corollary \ref{CorollaryHalfTheOrder} leads to the following result.  

\begin{remark}
For any integer $k\ge 1$,
$$\gamma_{\rm sp}(Q_k)=2^{k-1}.$$
\end{remark}

From Theorems \ref{theorem1} and \ref{Formula cycles and paths}, and Corollary \ref{InitialBound} we deduce the following result.

\begin{theorem}
Let $n\ge 3$ be an integer. The following statements hold for any graph  $H$ of order $n'\ge 2$.
\begin{itemize}
\item If $n\equiv 0\pmod 2$, then $\gamma_{\rm sp}(P_n\Box H)=\frac{nn'}{2}$.
\item If $n\equiv 1\pmod 2$, then $\frac{nn'}{2}\le \gamma_{\rm sp}(P_n\Box H)\le \frac{(n+1)n'}{2}$.
\item If $n\equiv 0\pmod 4$, then $\gamma_{\rm sp}(C_n\Box H)=\frac{nn'}{2}$.
\item If $n\equiv 1,3\pmod 4$, then $\frac{nn'}{2} \le \gamma_{\rm sp}(C_n\Box H)\le \frac{(n+1)n'}{2} $.
\item If $n\equiv 2\pmod 4$, then $\frac{nn'}{2} \le \gamma_{\rm sp}(C_n\Box H)\le \frac{(n+2)n'}{2} $.
\end{itemize}
\end{theorem}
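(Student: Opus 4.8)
The plan is to treat this as a direct case analysis, combining in each bullet the universal lower bound coming from Theorem \ref{theorem1} with the upper bound coming from Corollary \ref{InitialBound}, after substituting the explicit values of $\gamma_{\rm sp}(P_n)$ and $\gamma_{\rm sp}(C_n)$ recorded in Theorem \ref{Formula cycles and paths}. First I would isolate the two ingredients common to every item. Since $P_n\Box H$ and $C_n\Box H$ both have order $nn'$, Theorem \ref{theorem1} yields $\gamma_{\rm sp}(P_n\Box H)\ge \lceil nn'/2\rceil \ge nn'/2$ and likewise for $C_n\Box H$; this is precisely the left-hand inequality appearing in all five statements. For the upper bounds I would specialize Corollary \ref{InitialBound} to $G=P_n$ and to $G=C_n$, obtaining $\gamma_{\rm sp}(P_n\Box H)\le n'\gamma_{\rm sp}(P_n)$ and $\gamma_{\rm sp}(C_n\Box H)\le n'\gamma_{\rm sp}(C_n)$, respectively.

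For the two path items I would insert $\gamma_{\rm sp}(P_n)=\lceil n/2\rceil$. When $n$ is even, $\lceil n/2\rceil=n/2$, so the upper bound is $n'\cdot n/2=nn'/2$; moreover $n$ even forces $\lceil nn'/2\rceil=nn'/2$, so the lower and upper bounds coincide and give the equality of the first item. When $n$ is odd, $\lceil n/2\rceil=(n+1)/2$, so the upper bound is $(n+1)n'/2$, producing the interval of the second item.

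For the three cycle items I would substitute the piecewise value of $\gamma_{\rm sp}(C_n)$ and split according to $n\bmod 4$. When $n\equiv 0\pmod 4$ we have $\gamma_{\rm sp}(C_n)=n/2$, and exactly as in the even path case the two bounds meet at $nn'/2$. When $n\equiv 1$ or $n\equiv 3\pmod 4$ one checks $\gamma_{\rm sp}(C_n)=(n+1)/2$ (for $n\equiv 3$ via $\lceil n/2\rceil$ and for $n\equiv 1$ via $\lceil (n+1)/2\rceil$, both collapsing to $(n+1)/2$ because $n$ is odd), giving the upper bound $(n+1)n'/2$. Finally, when $n\equiv 2\pmod 4$ the integer $n$ is even, so $\lceil (n+1)/2\rceil=(n+2)/2$, giving the upper bound $(n+2)n'/2$.

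There is no genuine obstacle in this argument; the only point requiring care is the evaluation of $\lceil (n+1)/2\rceil$ across the residue classes, namely observing that it equals $(n+1)/2$ for odd $n$ but jumps to $(n+2)/2$ for even $n$, which is exactly what separates the $n\equiv 1,3$ case from the $n\equiv 2$ case of the cycle. Accordingly I would present the deduction as a compact case table matching each residue class to the corresponding value of $\gamma_{\rm sp}(P_n)$ or $\gamma_{\rm sp}(C_n)$, rather than as running prose.
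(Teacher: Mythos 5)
Your proposal is correct and follows exactly the paper's route: the paper deduces this theorem directly from Theorem~\ref{theorem1} (lower bound $\lceil nn'/2\rceil$), Theorem~\ref{Formula cycles and paths} (values of $\gamma_{\rm sp}(P_n)$ and $\gamma_{\rm sp}(C_n)$), and Corollary~\ref{InitialBound} (upper bound $n'\gamma_{\rm sp}(G)$), with the same residue-class bookkeeping for $\lceil (n+1)/2\rceil$ that you carry out. Nothing is missing.
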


As usual in domination theory, when studying a domination parameter, we can ask if a Vizing-like conjecture can be proved or formulated. 

\begin{conjecture}{\rm (Vizing-like conjecture)}
For any graphs $G$ and $H$,
$$\gamma_{\rm sp}(G\Box H)\ge \gamma_{\rm sp}(G )\gamma_{\rm sp}( H).$$
\end{conjecture}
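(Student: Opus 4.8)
The plan is to convert the conjectured inequality into a statement about the complement of a minimum super dominating set, and then to attack it by a fibre-projection argument in the spirit of the Clarke--Suen approach to Vizing's conjecture. Writing $\mu(F)=|V(F)|-\gamma_{\rm sp}(F)$ for the number of vertices outside a $\gamma_{\rm sp}(F)$-set, the inequality $\gamma_{\rm sp}(G\Box H)\ge \gamma_{\rm sp}(G)\gamma_{\rm sp}(H)$ is, after expanding $(n-\mu(G))(n'-\mu(H))$, equivalent to
\begin{equation*}
\mu(G\Box H)\le n\,\mu(H)+n'\,\mu(G)-\mu(G)\mu(H).
\end{equation*}
So I would fix a $\gamma_{\rm sp}(G\Box H)$-set $D$ together with a choice of private neighbour for each vertex of $\overline{D}$, and bound $|\overline{D}|=\mu(G\Box H)$ from above. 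Since every neighbour of a vertex in $G\Box H$ lies either in the same $H$-fibre (``column'') or in the same $G$-fibre (``row''), each vertex of $\overline{D}$ falls into exactly one of two classes: let $A$ consist of those whose chosen private neighbour lies in the same column, and $B$ of those whose private neighbour lies in the same row, so that $\overline{D}=A\sqcup B$.

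The key step is a fibre-restriction lemma. For each $g\in V(G)$, writing $A_g=\{h:(g,h)\in A\}$, I would show that $V(H)\setminus A_g$ is a super dominating set of $H$: if $h\in A_g$ has private neighbour $(g,h')$, then $h'\in N_H(h)$, $(g,h')\in D$, and the defining condition $N(g,h')\cap\overline{D}=\{(g,h)\}$ forces $N_H(h')\cap A_g=\{h\}$, so $h'$ witnesses super domination of $h$ relative to the complement $A_g$. Hence $|A_g|\le n'-\gamma_{\rm sp}(H)=\mu(H)$, and symmetrically $|B_h|\le \mu(G)$ for each row $h$. Summing over all columns and all rows yields $|A|\le n\,\mu(H)$ and $|B|\le n'\,\mu(G)$, whence
\begin{equation*}
\mu(G\Box H)=|A|+|B|\le n\,\mu(H)+n'\,\mu(G),
\end{equation*}
that is, $\gamma_{\rm sp}(G\Box H)\ge \gamma_{\rm sp}(G)\gamma_{\rm sp}(H)-\mu(G)\mu(H)$.

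The main obstacle is the remaining deficit of exactly $\mu(G)\mu(H)$. Setting $a=\gamma_{\rm sp}(G)/n$ and $b=\gamma_{\rm sp}(H)/n'$, the projection bound equals $nn'(a+b-1)=\gamma_{\rm sp}(G)\gamma_{\rm sp}(H)-nn'(1-a)(1-b)$, so it is strictly weaker than the target whenever $\mu(G)\mu(H)>0$, while the trivial bound $\gamma_{\rm sp}(G\Box H)\ge \lceil nn'/2\rceil$ of Theorem~\ref{UpperBopundGTimesH} only settles the easy regime $ab\le\tfrac12$; both simple bounds fail precisely when both factors have large super domination number. Closing the gap therefore requires showing that the per-fibre inequalities $|A_g|\le\mu(H)$ and $|B_h|\le\mu(G)$ cannot all be tight simultaneously---an incompatibility between the columns that saturate $A$ and the rows that saturate $B$. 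I would try to extract this from the extremal structure revealed by the fibre lemma, since a column with $|A_g|=\mu(H)$ pins down $D_g$ up to the structure of the $\gamma_{\rm sp}(H)$-sets and should obstruct some row from attaining $|B_h|=\mu(G)$; failing a complete argument, I would first aim for the intermediate milestone $\gamma_{\rm sp}(G\Box H)\ge\tfrac12\,\gamma_{\rm sp}(G)\gamma_{\rm sp}(H)$ through a Clarke--Suen-type weighted partition. This gap-closing is where the difficulty of the underlying Vizing-type phenomenon is concentrated, and I expect it to be the hard part.
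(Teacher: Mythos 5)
The statement you are asked to prove is stated in the paper only as a conjecture: the authors give no proof of it, and the sole evidence they offer is the special case where $\gamma_{\rm sp}(G)=\frac{n}{2}$ or $\gamma_{\rm sp}(H)=\frac{n'}{2}$, which follows immediately from Theorem \ref{CorollaryHalfTheOrder} since then $\gamma_{\rm sp}(G\Box H)=\frac{nn'}{2}\ge\gamma_{\rm sp}(G)\gamma_{\rm sp}(H)$. So there is no proof in the paper to match yours against, and your proposal, as you yourself acknowledge, does not prove the conjecture either: it proves something strictly weaker and then describes, without executing, a strategy for closing the gap.

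That said, the part of your argument that you do carry out is correct and is genuinely new relative to the paper. Writing $\mu(F)=|V(F)|-\gamma_{\rm sp}(F)$, your fibre-restriction lemma is sound: if $(g,h)\in\overline{D}$ has its private neighbour $(g,h')$ in the same $H$-fibre, then $N(g,h')\cap\overline{D}=\{(g,h)\}$ restricted to $\{g\}\times N_H(h')$ gives $N_H(h')\cap A_g=\{h\}$ with $h'\notin A_g$, so $V(H)\setminus A_g$ is indeed a super dominating set of $H$ and $|A_g|\le\mu(H)$; summing the two classes yields the valid inequality
\begin{equation*}
\gamma_{\rm sp}(G\Box H)\;\ge\; n'\gamma_{\rm sp}(G)+n\gamma_{\rm sp}(H)-nn'\;=\;\gamma_{\rm sp}(G)\gamma_{\rm sp}(H)-\mu(G)\mu(H).
\end{equation*}
This is a legitimate Clarke--Suen-style partial result worth recording. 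The genuine gap is exactly where you locate it: whenever $\mu(G)\mu(H)>0$ (i.e., neither factor attains $\gamma_{\rm sp}=\frac{n}{2}$, in which case the conjecture is not already settled by Theorem \ref{CorollaryHalfTheOrder}), your bound falls short of the conjectured one by $\mu(G)\mu(H)$, and the proposed mechanism for recovering this deficit --- showing that the per-fibre inequalities cannot all be simultaneously tight --- is only a heuristic, with no argument given that a saturated column actually obstructs a saturated row. Until that incompatibility is proved, the conjecture remains open, so the proposal should be presented as a partial result toward the conjecture rather than as a proof of it.
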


 The above conjecture holds in the following case, which  is a direct consequence of Theorem \ref{CorollaryHalfTheOrder}.

\begin{remark}
Let   $G$  and $H$ be two graphs of order $n\ge 2$ and  $n'\ge 2$, respectively. If $\gamma_{\rm sp}(G)=\frac{n}{2}$ or $\gamma_{\rm sp}(H)=\frac{n'}{2}$, then $$\gamma_{\rm sp}(G\Box H)\ge \gamma_{\rm sp}(G )\gamma_{\rm sp}( H).$$
\end{remark}

In order to deduce another consequence of Theorem \ref{UpperBopundGTimesH}  we need to state the following lemma.
\begin{lemma}\label{RemarkUniversalVertex2}
Let $I(G)$ be the number of vertices of degree one of a graph $G$, and let $S\in \mathcal{S}(G)$. If there exists a universal vertex $v$ of $G$ such that $v\not\in \overline{S}\cup S^*$, for some $S^*\in \mathcal{P}(S)$, then $\lambda(G)\ge I(G)$.
\end{lemma}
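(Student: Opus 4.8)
The plan is to produce, directly from the definition of $\lambda(G)$, an explicit admissible set $X\subseteq S$ of cardinality $I(G)$ for the particular pair $(S,S^*)$ supplied by the hypothesis; this immediately forces $\lambda(G)\ge I(G)$. Let $v$ be the universal vertex from the hypothesis. Since $v\notin\overline{S}\cup S^*$, in particular $v\notin\overline{S}$, so $v\in S$, and also $v\notin S^*$. Write $L$ for the set of all vertices of degree one, so that $|L|=I(G)$; the candidate will be $X=L$.

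The first, essentially structural, observation is that because $v$ is universal it is adjacent to every other vertex, so each $w\in L$ with $w\ne v$ has a unique neighbour which must be $v$, i.e. $N(w)=\{v\}$. (For $n\ge 3$ the universal vertex $v$ has degree $n-1\ge 2$ and is therefore not itself a leaf; the only degenerate possibility, $n=2$ with $G\cong K_2$, makes the hypothesis vacuous since there every universal vertex lies in $\overline{S}\cup S^*$.)

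The key step is to show $L\subseteq S$, that is, no leaf lies in $\overline{S}$. Suppose to the contrary that some $w\in L$ satisfies $w\in\overline{S}$. As $S^*\in\mathcal{P}(S)$, there exists $w^*\in S^*$ with $N(w^*)\cap\overline{S}=\{w\}$; in particular $w\in N(w^*)$, so $w^*$ is a neighbour of $w$. But the only neighbour of $w$ is $v$, which forces $w^*=v$ and contradicts $v\notin S^*$. Hence $L\cap\overline{S}=\emptyset$, so $L\subseteq S$.

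It remains to verify that $X=L$ is admissible. We have $X\subseteq S$ by the previous step, and $N(X)=\bigcup_{w\in L}N(w)\subseteq\{v\}$ by the first observation, whence $N(X)\cap(\overline{S}\cup S^*)\subseteq\{v\}\cap(\overline{S}\cup S^*)=\emptyset$ using $v\notin\overline{S}\cup S^*$. Therefore $X$ is a valid competitor in the maximum defining $\lambda(G)$ for this $(S,S^*)$, giving $\lambda(G)\ge|X|=|L|=I(G)$. I expect the only genuine content to be the middle step: recognizing that a leaf's private neighbour must be $v$ and that this is precisely the vertex the hypothesis excludes from $S^*$. The remaining points are bookkeeping with the definitions, and the edge cases $L=\emptyset$ (where the bound reads $\lambda(G)\ge 0$) and $v\in L$ are handled trivially.
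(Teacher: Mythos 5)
Your proof is correct, and it is organized rather differently from the paper's. The paper does not work with the pair $(S,S^*)$ supplied by the hypothesis: instead it splits into the cases $\gamma_{\rm sp}(G)=n-1$ and $\gamma_{\rm sp}(G)\le n-2$, disposes of $G\cong K_{1,n-1}$ (where the hypothesis is vacuous), constructs an explicit pair $S=V(G)\setminus\{x\}$, $S^*=\{y\}$ in the first case, and invokes Remark~\ref{RemarkUniversalVertex1} in the second to conclude that \emph{every} admissible pair excludes $v$. You bypass all of this by simply taking the hypothesized pair and exhibiting $X=L$ (the set of leaves) as a competitor in the maximum defining $\lambda(G)$; since $\lambda$ is a maximum over all choices, one witness suffices. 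Both arguments ultimately rest on the same observation --- a degree-one vertex has $N(w)=\{v\}$ and $v\notin\overline{S}\cup S^*$ --- but your version is shorter, needs no case analysis, and makes explicit a point the paper leaves implicit, namely that $L\subseteq S$ (a leaf in $\overline{S}$ would be forced to have $v$ as its private neighbour, putting $v\in S^*$). Your handling of the degenerate cases ($L=\emptyset$, and $v$ itself a leaf only when $G\cong K_2$, where the hypothesis fails) is also sound.
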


\begin{proof}
Let $v$ be a universal vertex of $G$. If $I(G)=0$, then  we are done, so that we assume that $I(G)>0$. 

We first suppose that  $\gamma_{\rm sp}(G)=n-1$. If $G\cong K_{1,n-1}$, then for any $S\in \mathcal{S}(G)$ and  $S^*\in \mathcal{P}(S)$ the universal vertex of $G$ belongs to $ \overline{S}\cup S^*$. So we assume that  $G\not\cong K_{1,n-1}$. In such a case, for any pair of adjacent vertices $x,y\in V(G)\setminus \{v\}$ we have that $S=V(G)\setminus \{x\}\in \mathcal{S}(G)$ and $S^*=\{y\}\in \mathcal{P}(S)$, which implies that for any vertex $x$ of degree one, $N(x)\cap  (\overline{S}\cup S^*)=\emptyset$. Hence,  $\lambda(G)\ge I(G)$.

Now, suppose that $\gamma_{\rm sp}(G)\le n-2$. By Remark \ref{RemarkUniversalVertex1}, for any $S\in \mathcal{S}(G)$ and  $S^*\in \mathcal{P}(S)$ the universal vertex  $v$ does not belong to $ \overline{S}\cup S^*$, which implies that for any vertex $u$ of degree one, $N(u)\cap  (\overline{S}\cup S^*)=\emptyset$. Thus,  $\lambda(G)\ge I(G)$.
\end{proof}

By Lemma \ref{RemarkUniversalVertex2} we can derive a consequence of  Theorem \ref{UpperBopundGTimesH}  in which we replace the parameter $\lambda(G)$ by the number of vertices of degree one in $G$.

\begin{proposition}\label{CorollaryUpperboundUniversal}
Let $I(G)$ be the number of vertices of degree one of a graph $G$ of order $n$, and let $S\in \mathcal{S}(G)$. If there exists a universal vertex $v$ of $G$ such that $v\not\in \overline{S}\cup S^*$, for some $S^*\in \mathcal{P}(S)$, then for any graph $H$ of order $n'$,
$$\gamma_{\rm sp}(G\Box H)\le  n'\gamma_{\rm sp}(G)-I(G)(n'-\gamma_{\rm sp}(H)).$$
\end{proposition}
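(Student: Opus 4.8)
The plan is to combine the two previously established results directly, since the hypotheses of Proposition \ref{CorollaryUpperboundUniversal} are exactly those of Lemma \ref{RemarkUniversalVertex2}. The proposition is essentially a corollary of Theorem \ref{UpperBopundGTimesH}, obtained by substituting a lower bound on $\lambda(G)$ into the upper bound that the theorem provides.

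First I would invoke Theorem \ref{UpperBopundGTimesH}, which states that for any graphs $G$ and $H$ of order $n\ge 2$ and $n'\ge 2$,
$$\gamma_{\rm sp}(G\Box H)\le n'\gamma_{\rm sp}(G)-\lambda(G)(n'-\gamma_{\rm sp}(H)).$$
Next, under the stated hypothesis that there exists a universal vertex $v$ of $G$ with $v\notin \overline{S}\cup S^*$ for some $S\in\mathcal{S}(G)$ and $S^*\in\mathcal{P}(S)$, Lemma \ref{RemarkUniversalVertex2} gives $\lambda(G)\ge I(G)$. The one point requiring a moment's care is the sign: since $\gamma_{\rm sp}(H)\le n'$ (indeed $\gamma_{\rm sp}(H)\le n'-1$ for any graph without isolated vertices, and in general $\gamma_{\rm sp}(H)\le n'$), the coefficient $(n'-\gamma_{\rm sp}(H))$ is nonnegative, so the map $\lambda\mapsto n'\gamma_{\rm sp}(G)-\lambda(n'-\gamma_{\rm sp}(H))$ is nonincreasing in $\lambda$. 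Therefore replacing $\lambda(G)$ by the smaller quantity $I(G)$ can only increase the right-hand side, yielding
$$\gamma_{\rm sp}(G\Box H)\le n'\gamma_{\rm sp}(G)-\lambda(G)(n'-\gamma_{\rm sp}(H))\le n'\gamma_{\rm sp}(G)-I(G)(n'-\gamma_{\rm sp}(H)),$$
which is the desired bound.

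The main (and in fact only) obstacle is verifying that the factor $(n'-\gamma_{\rm sp}(H))$ is nonnegative, so that weakening the bound on $\lambda(G)$ preserves the inequality in the correct direction; this follows immediately from the trivial upper bound $\gamma_{\rm sp}(H)\le n'$. Apart from this observation, the argument is a direct chaining of Theorem \ref{UpperBopundGTimesH} with Lemma \ref{RemarkUniversalVertex2}, and no further construction is needed.
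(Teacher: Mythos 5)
Your proposal is correct and matches the paper's own derivation: the proposition is stated as an immediate consequence of substituting the bound $\lambda(G)\ge I(G)$ from Lemma \ref{RemarkUniversalVertex2} into the upper bound of Theorem \ref{UpperBopundGTimesH}. Your explicit check that $n'-\gamma_{\rm sp}(H)\ge 0$ (so the substitution goes in the right direction) is a small point the paper leaves implicit, but it is the same argument.
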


In order to see that the bound above is tight, we can observe that for  $G\cong K_1+(K_2\cup  K_1)$ and $H\cong K_{n'}$, $n'\ge 3$, we have 
$\gamma_{\rm sp}(G\Box H)=3n'-1=  n'\gamma_{\rm sp}(G)-I(G)(n'-\gamma_{\rm sp}(H)).$

Notice that, by Remark \ref{RemarkUniversalVertex1}, a particular case of 
Proposition \ref{CorollaryUpperboundUniversal} can be stated as follows.

\begin{corollary}
Let $I(G)$ be the number of vertices of degree one of a graph $G$ of order $n$ and maximum degree $n-1$. If $\gamma_{\rm sp}(G)\le n-2$,  then for any graph $H$ of order $n'$,
$$\gamma_{\rm sp}(G\Box H)\le  n'\gamma_{\rm sp}(G)-I(G)(n'-\gamma_{\rm sp}(H)).$$
\end{corollary}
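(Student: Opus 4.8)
The plan is to recognise this corollary as the specialisation of Proposition~\ref{CorollaryUpperboundUniversal} in which the assumption ``$v\notin\overline{S}\cup S^*$ for some $S^*\in\mathcal{P}(S)$'' is upgraded from a hypothesis to a consequence of the two stated conditions. The only real work, then, is to verify that the hypotheses of the corollary force the hypotheses of the proposition, after which the bound is obtained by a direct appeal.

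First I would observe that the condition ``maximum degree $n-1$'' simply means that $G$ possesses a universal vertex $v$, so the vertex required by Proposition~\ref{CorollaryUpperboundUniversal} is at hand. Next, fix any $S\in\mathcal{S}(G)$ (such a set exists because a minimum super dominating set always exists) and note that $\mathcal{P}(S)\ne\emptyset$: for each $u\in\overline{S}$ the super-domination property provides a private neighbour $u^*\in S$ with $N(u^*)\cap\overline{S}=\{u\}$, and distinct vertices of $\overline{S}$ necessarily receive distinct private neighbours, so choosing one per vertex yields a set $S^*\subseteq S$ of cardinality $|\overline{S}|$ belonging to $\mathcal{P}(S)$.

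The central step is to invoke the contrapositive of Remark~\ref{RemarkUniversalVertex1}. That remark asserts that if the universal vertex $v$ lies in $\overline{S}\cup S^*$ for some $S^*\in\mathcal{P}(S)$, then $\gamma_{\rm sp}(G)=n-1$. Since our hypothesis gives $\gamma_{\rm sp}(G)\le n-2\ne n-1$, the contrapositive yields that $v\notin\overline{S}\cup S^*$ for every $S^*\in\mathcal{P}(S)$; in particular this holds for the set $S^*$ selected above. Thus the pair $(S,S^*)$ together with the universal vertex $v$ satisfies exactly the hypothesis of Proposition~\ref{CorollaryUpperboundUniversal}, and applying that proposition delivers the desired inequality $\gamma_{\rm sp}(G\Box H)\le n'\gamma_{\rm sp}(G)-I(G)(n'-\gamma_{\rm sp}(H))$ for every graph $H$ of order $n'$.

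Because the argument is a direct reduction, I do not anticipate a genuine obstacle; the only points requiring a moment's care are the logical direction of Remark~\ref{RemarkUniversalVertex1}, namely that one must apply its contrapositive rather than the statement itself, and the small verification that $\mathcal{P}(S)$ is nonempty so that the phrase ``for some $S^*$'' is meaningful.
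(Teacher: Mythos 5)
Your proposal is correct and matches the paper's reasoning exactly: the paper presents this corollary as a particular case of Proposition~\ref{CorollaryUpperboundUniversal} obtained "by Remark~\ref{RemarkUniversalVertex1}", which is precisely your reduction via the contrapositive of that remark. The additional checks you include (existence of a universal vertex from $\Delta=n-1$ and nonemptiness of $\mathcal{P}(S)$) are sound and only make explicit what the paper leaves implicit.
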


We now provide a tight bound on $\gamma_{\rm sp}(G\Box H)$ in terms of the order of $G$ and $H$.

\begin{theorem}\label{UpperBopund2}
For any nonempty graphs $G$  and $H$ of order $n\ge 2$ and  $n'\ge 2$, respectively,
$$\gamma_{\rm sp}(G\Box H)\le nn'-n-n'+4.$$
 Furthermore,
for any integers $n\ge 4$ and  $n'\ge 4$,$$\gamma_{\rm sp}(K_{n}\Box K_{n'})= nn'-n-n'+4$$
 and for any integer $n\ge 3$,$$\gamma_{\rm sp}(K_{n}\Box K_{3})= 2n.$$
\end{theorem}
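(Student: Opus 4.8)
The plan is to treat the three assertions separately: derive the general upper bound by an explicit construction, establish the equality for $K_n\Box K_{n'}$ via a matching lower bound, and obtain the $K_n\Box K_3$ formula by combining Corollary \ref{InitialBound} with the same lower-bound machinery.

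For the general upper bound, write $V(G)=\{g_1,\dots,g_n\}$ and $V(H)=\{h_1,\dots,h_{n'}\}$ and fix edges $g_1g_2\in E(G)$ and $h_1h_2\in E(H)$, which exist because $G$ and $H$ are nonempty. I would set
$$\overline{W}=\{(g_1,h_j):3\le j\le n'\}\cup\{(g_i,h_1):3\le i\le n\},$$
so that $|\overline{W}|=(n'-2)+(n-2)=n+n'-4$, and claim that $W=V(G\Box H)\setminus\overline{W}$ is super dominating. The verification splits into two symmetric checks: for $(g_1,h_j)$ I would exhibit $(g_2,h_j)\in W$ as a private neighbour, using $g_1\in N_G(g_2)$ to see that the only vertex of $\overline{W}$ adjacent to $(g_2,h_j)$ is $(g_1,h_j)$; for $(g_i,h_1)$ I would use $(g_i,h_2)\in W$ analogously via $h_1\in N_H(h_2)$. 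The boundary values $n=2$ or $n'=2$ are covered automatically since the corresponding part of $\overline{W}$ becomes empty. This yields $\gamma_{\rm sp}(G\Box H)\le|W|=nn'-n-n'+4$.

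For the equality with $n,n'\ge4$ the upper bound is the case just proved, so it remains to show that every super dominating set $D$ of the rook's graph $K_n\Box K_{n'}$ satisfies $|\overline{D}|\le n+n'-4$. Writing $r_i$ and $c_j$ for the numbers of $\overline{D}$-vertices in row $i$ and column $j$, the key observation is that a private neighbour $v=(a,b)$ of $u=(i,j)\in\overline{D}$ forces the $\overline{D}$-vertices in row $a$ together with those in column $b$ to equal $\{u\}$; hence each $u=(i,j)\in\overline{D}$ satisfies either ($r_i=1$ and some column lies entirely in $D$) or ($c_j=1$ and some row lies entirely in $D$). In particular no $\overline{D}$-vertex lies simultaneously in a row with $r_i\ge2$ and a column with $c_j\ge2$. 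Letting $u_R,u_C$ denote the numbers of rows and columns containing exactly one $\overline{D}$-vertex, a counting argument gives $|\overline{D}|\le u_R+u_C$: the $\overline{D}$-vertices in rows with $r_i\ge2$ all have $c_j=1$ and occupy distinct columns, so they number at most $u_C$, while the rest lie in unit rows and number $u_R$.

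The lower bound is then finished by a short case analysis, which I expect to be the main obstacle, since one must invoke the correct structural consequence in each branch. If no row has $r_i\ge2$ then $|\overline{D}|=u_R\le n\le n+n'-4$ (using $n'\ge4$); symmetrically, if no column is heavy then $|\overline{D}|\le n'\le n+n'-4$ (using $n\ge4$). In the remaining case a heavy row and a heavy column both exist, and the private-neighbour condition additionally forces a fully-$D$ row and a fully-$D$ column; counting heavy and empty rows and columns then gives $u_R\le n-2$ and $u_C\le n'-2$, whence $|\overline{D}|\le n+n'-4$. Finally, for $K_n\Box K_3$ with $n\ge3$ I would take the upper bound $\gamma_{\rm sp}(K_n\Box K_3)\le\min\{3(n-1),2n\}=2n$ directly from Corollary \ref{InitialBound}, and for the lower bound rerun the same analysis with $n'=3$: the three cases yield $|\overline{D}|\le n$, $|\overline{D}|\le3\le n$, and $|\overline{D}|\le n-1$ respectively, so $|\overline{D}|\le n$ and $\gamma_{\rm sp}(K_n\Box K_3)=3n-|\overline{D}|\ge2n$.
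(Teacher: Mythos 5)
Your proposal is correct and follows essentially the same route as the paper: the identical explicit construction of a complement of size $n+n'-4$ for the upper bound, and for the complete-graph lower bound the same key observation that a private neighbour of $(i,j)\in\overline{D}$ forces either its row or its column to contain only $(i,j)$ and forces an entirely-$D$ column or row. Your heavy/unit/empty row--column counting is just a more explicit bookkeeping of the paper's bound $|\overline{W}|\le\max\{n,n',n+n'-4\}$, and using Corollary \ref{InitialBound} for the $K_n\Box K_3$ upper bound in place of the paper's explicit set $V(K_n)\times\{y,y'\}$ is an equally valid minor variation.
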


\begin{proof}
Let $x_1,x_2\in V(G)$ and $y_1,y_2\in V(H)$ such that $x_2\in N(x_1)$ and $y_2\in N(y_1)$. Now, let $X\subseteq V(G\Box H)$ such that
$$\overline{X}=\left((V(G)\setminus \{x_1,x_2\})\times \{y_1\} \right)\cup \left( \{x_1\}\times (V(H)\setminus \{y_1,y_2\}) \right).$$
To check  that $X$ is a super dominating set of $G\Box H$ we only need to observe that for any $(x,y_1)\in \overline{X}$ there exists $(x,y_2)\in X$ such that $N(x,y_2)\cap \overline{X}=\{x,y_1\}$ and for any $(x_1,y)\in \overline{X}$ there exists $(x_2,y)\in X$ such that $N(x_2,y)\cap \overline{X}=\{x_1,y\}$. 
Hence, $\gamma_{\rm sp}(G\Box H)\le|X|= nn'-n-n'+4$.

To conclude the proof, it remains to consider the Cartesian product of complete graphs.  
Let $W$ be a $\gamma_{\rm sp}(K_{n}\Box K_{n'})$-set. 
Notice that if $(x,y)\in \overline{W}$, $(a,b)\in W$ and $N(a,b)\cap \overline{W}=\{(x,y)\}$, then  $x=a$ leads to $\overline{W}\cap (V(K_n)\times \{b\})=\emptyset$ 
 and $y=b$ leads to $\overline{W}\cap (\{a\}\times V(K_{n'}) )=\emptyset$. 
Furthermore,  if $(x,y),(x',y)\in \overline{W}$, then $\overline{W}\cap (\{x,x'\}\times V(K_{n'}))=\{(x,y),(x',y)\}$, as for any $y'\in V(K_{n'})\setminus \{y\}$ we have that $N(x',y)\subseteq N[(x,y)]\cup N[(x',y')]$ and $N(x,y)\subseteq N[(x',y)]\cup N[(x,y')]$. Analogously, if $(x,y),(x,y')\in \overline{W}$, then $\overline{W}\cap ( V(K_{n})\times \{y,y'\})=\{(x,y),(x,y')\}$.   Hence,  
\begin{equation}\label{LowerBoundComplementComplete}
\overline{W}|\le \max\{n,n',n+n'-4\}.
\end{equation}
Thus, if $n\ge 4$ and $n'\ge 4$, then $\gamma_{\rm sp}(K_{n}\Box K_{n'})\ge nn'-n-n'+4$, which implies that $\gamma_{\rm sp}(K_{n}\Box K_{n'}) = nn'-n-n'+4$.

Moreover, if $n\ge 3$, then Equation \eqref{LowerBoundComplementComplete} leads to $\gamma_{\rm sp}(K_{n}\Box K_{3})\ge 2n$. To conclude that $\gamma_{\rm sp}(K_{n}\Box K_{3})= 2n$ we only need to observe that for any $y,y'\in V(K_3)$ the set $V(K_n)\times \{y,y'\}$  is a super dominating set of $K_n\Box K_3$. Therefore, the result follows.
\end{proof}

The independence number of any Cartesian product graph is bounded below as follows. 

\begin{theorem}{\rm \cite{Vizing1963}}\label{VizingTheoremIndependence}
For any graphs $G$ and $H$ of order $n$ and $n'$, respectively,
$$\alpha(G\Box H)\ge \alpha(G)\alpha(H)+\min\{n-\alpha(G),n'-\alpha(H)\}.$$
\end{theorem}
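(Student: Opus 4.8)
The plan is to exhibit directly a large independent set in $G\Box H$ built from maximum independent sets of the two factors. Fix a maximum independent set $A\subseteq V(G)$ and a maximum independent set $B\subseteq V(H)$, so $|A|=\alpha(G)$, $|B|=\alpha(H)$, and write $\overline{A}=V(G)\setminus A$ and $\overline{B}=V(H)\setminus B$, which have cardinalities $n-\alpha(G)$ and $n'-\alpha(H)$. Since the Cartesian product is commutative ($G\Box H\cong H\Box G$), I may assume without loss of generality that $n-\alpha(G)\le n'-\alpha(H)$, so that the minimum in the statement equals $|\overline{A}|$ and, crucially, $|\overline{A}|\le|\overline{B}|$. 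This last inequality is exactly what makes the construction below possible, and it is the only place where the $\min$ enters.

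First I would record the base block: the set $A\times B$ is independent in $G\Box H$. Indeed, two distinct vertices $(a,b),(a',b')\in A\times B$ could be adjacent only if $a=a'$ and $bb'\in E(H)$ (impossible, as $B$ is independent) or $aa'\in E(G)$ and $b=b'$ (impossible, as $A$ is independent). Next, since $|\overline{A}|\le|\overline{B}|$, I choose an injection $\phi\colon\overline{A}\to\overline{B}$ and set
$$I=(A\times B)\cup\{(v,\phi(v)):v\in\overline{A}\}.$$
The two parts are disjoint because their first coordinates lie in $A$ and in $\overline{A}$ respectively, and injectivity of $\phi$ guarantees the second part has exactly $|\overline{A}|$ elements, so $|I|=\alpha(G)\alpha(H)+(n-\alpha(G))$. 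Thus the bound follows as soon as I establish that $I$ is independent.

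The core of the argument is therefore verifying independence of $I$, which I would split into three cases by the location of the two endpoints. Adjacencies inside $A\times B$ are already excluded. For two appended vertices $(v,\phi(v))$ and $(v',\phi(v'))$ with $v\ne v'$, adjacency would force either $v=v'$ (false) or $vv'\in E(G)$ together with $\phi(v)=\phi(v')$; the latter fails since $\phi$ is injective. Finally, for a mixed pair $(a,b)\in A\times B$ and $(v,\phi(v))$, adjacency would force either $a=v$ (impossible, since $a\in A$ and $v\in\overline{A}$) or $av\in E(G)$ together with $b=\phi(v)$; the equality $b=\phi(v)$ is impossible because $b\in B$ while $\phi(v)\in\overline{B}$. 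Hence $I$ is independent and $\alpha(G\Box H)\ge|I|$, which is the claim in the branch $n-\alpha(G)\le n'-\alpha(H)$; the complementary branch follows by the symmetry noted at the outset.

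I expect no serious obstacle: the construction is explicit and the verification is a finite case check. The single idea that must be gotten right — and the reason the bound takes this precise form — is that each appended vertex $(v,\phi(v))$ must have its second coordinate in $\overline{B}$ (to avoid collisions with $A\times B$) while these second coordinates must be pairwise distinct (to avoid collisions among the appended vertices), and both requirements can be met simultaneously exactly when $|\overline{A}|\le|\overline{B}|$, that is, in the branch selected by the $\min$.
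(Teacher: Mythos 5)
Your proof is correct. The paper does not prove this statement at all --- it is quoted as a known result of Vizing \cite{Vizing1963} --- so there is no internal argument to compare against; your construction $I=(A\times B)\cup\{(v,\phi(v)):v\in\overline{A}\}$ with an injection $\phi\colon\overline{A}\to\overline{B}$ is the standard proof of this bound, and your three-case verification of independence, together with the observation that the injection exists precisely in the branch selected by the $\min$, is complete.
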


From Theorems \ref{GallaiTheorem}, \ref{LowerBoundSecureDomBipartiteIndep} and \ref{VizingTheoremIndependence}   we deduce the following result.

\begin{theorem}\label{LoweboundSecureDomCartesianBipartite}
For any pair of bipartite graphs $G$ and $H$,
$$\gamma_{\rm sp}(G\Box H)\ge \alpha(G)\alpha(H)+\min\{\beta(G),\beta(H)\}.$$
\end{theorem}

\begin{theorem}\label{SuperDominationStarTimeStar}
For any integers $r\ge r'\ge 1$,
$$ \gamma_{\rm sp}(K_{1,r}\Box K_{1,r'})= rr'+1.$$
\end{theorem}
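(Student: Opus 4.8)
The plan is to prove the two inequalities separately. For the lower bound $\gamma_{\rm sp}(K_{1,r}\Box K_{1,r'})\ge rr'+1$, I would invoke Theorem~\ref{LoweboundSecureDomCartesianBipartite} directly, since both stars are bipartite. A star $K_{1,m}$ has independence number $\alpha(K_{1,m})=m$ (its $m$ leaves) and, by Theorem~\ref{GallaiTheorem}, vertex cover number $\beta(K_{1,m})=1$. Substituting $G=K_{1,r}$ and $H=K_{1,r'}$ into $\gamma_{\rm sp}(G\Box H)\ge \alpha(G)\alpha(H)+\min\{\beta(G),\beta(H)\}$ gives exactly $rr'+\min\{1,1\}=rr'+1$, so the lower bound is immediate.

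For the upper bound I would exhibit a super dominating set of cardinality $rr'+1$, equivalently a set $\overline{D}$ with $|\overline{D}|=r+r'$ whose complement is super dominating. I would first dispose of the degenerate case $r'=1$: here $K_{1,r'}\cong K_2$, and the consequence of Theorem~\ref{CorollaryHalfTheOrder} that $\gamma_{\rm sp}(G\Box K_2)=|V(G)|$ yields $\gamma_{\rm sp}(K_{1,r}\Box K_2)=r+1=rr'+1$. So assume $r'\ge 2$. Write $V(K_{1,r})=\{u_0,\dots,u_r\}$ and $V(K_{1,r'})=\{v_0,\dots,v_{r'}\}$ with $u_0,v_0$ the centres, and note that in $K_{1,r}\Box K_{1,r'}$ the vertex $(u_i,v_j)$ with $i,j\ge 1$ has exactly the two neighbours $(u_i,v_0)$ and $(u_0,v_j)$. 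Fixing any non-constant map $\sigma\colon\{1,\dots,r\}\to\{1,\dots,r'\}$, I would set
$$\overline{D}=\{(u_0,v_j):1\le j\le r'\}\cup\{(u_i,v_{\sigma(i)}):1\le i\le r\},$$
which has $r+r'$ elements since the two listed families have disjoint first coordinates.

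The heart of the argument, and the step I expect to require the most care, is verifying that $D=V\setminus\overline{D}$ is super dominating by producing a private neighbour for each vertex of $\overline{D}$. For a chosen corner $(u_i,v_{\sigma(i)})$ I would use the row vertex $(u_i,v_0)\in D$: since $\overline{D}$ contains exactly one corner in row $i$, the only neighbour of $(u_i,v_0)$ lying in $\overline{D}$ is $(u_i,v_{\sigma(i)})$. For a column vertex $(u_0,v_j)$ I would use a corner $(u_{\tau(j)},v_j)$ with $\sigma(\tau(j))\ne j$; such a row $\tau(j)$ exists precisely because $\sigma$ is non-constant (this is exactly where $r'\ge 2$ is needed, and why $r'=1$ must be treated apart). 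That corner lies in $D$, and its only neighbour in $\overline{D}$ is $(u_0,v_j)$. Finally I would check that these private neighbours are pairwise distinct --- the row vertices are distinct leaves, the corners $(u_{\tau(j)},v_j)$ lie in distinct columns, and leaves and corners never coincide --- so they form the required set $D^\ast$ of size $r+r'$. This shows $\gamma_{\rm sp}\le rr'+1$, and together with the lower bound completes the proof.
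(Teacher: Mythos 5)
Your proof is correct and follows essentially the same route as the paper: the lower bound is obtained identically from Theorem~\ref{LoweboundSecureDomCartesianBipartite} (with $\alpha(K_{1,m})=m$, $\beta(K_{1,m})=1$), and the upper bound comes from deleting an explicit set of $r+r'$ vertices --- the paper deletes $U_0\times\{v_0\}$ together with $r'$ corners concentrated in rows $r-1$ and $r$, whereas you delete $\{u_0\}\times V_0$ together with one corner per row chosen by a non-constant map $\sigma$, a mirror image of the same idea. One point in your favour: you correctly isolate the case $r'=1$, where no non-constant $\sigma$ exists and one falls back on $\gamma_{\rm sp}(G\Box K_2)=|V(G)|$; the paper's displayed sets $X$ and $Y$ actually intersect when $r'=1$ (both contain $(u_{r-1},v_1)$), so its verification implicitly assumes $r'\ge 2$ as well.
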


\begin{proof}By Theorem \ref{LoweboundSecureDomCartesianBipartite}, 
$\gamma_{\rm sp}(K_{1,r}\Box K_{1,r'}) \ge rr'+1$. 
Next, we proceed to show that $\gamma_{\rm sp}(K_{1,r}\Box K_{1,r'}) \le rr'+1$. Let $u_0$, $v_0$, and $U_0=\{u_1,\dots ,u_r\}$, $V_0=\{v_1,\dots ,v_{r'}\}$, be the center and the set of leaves of $K_{1,r}$ and $K_{1,r'}$, respectively. Let $X=(U_0\times \{v_0\})\cup (\cup_{j=1}^{r'-1} \{(u_r,v_j)\}) \cup \{(u_{r-1}, v_{r'})\}$ and $Y=(\{u_0\}\times V_0)\cup (\cup_{i=1}^{r-1} \{(u_i,v_1)\}) \cup \{(u_r, v_{r'})\}$. Note that, for each vertex $x\in X \subseteq \overline{Y}$, there exists exactly one vertex $y \in Y$ such that $N(y) \cap X=\{x\}$. So, $V(K_{1,r}\Box K_{1,r'}) \setminus X$ is a super dominating set of $K_{1,r} \square K_{1,r'}$ with cardinality $(r+1)(r'+1)-(r+r')=rr'+1$, and thus $\gamma_{\rm sp}(K_{1,r}\Box K_{1,r'})\le rr'+1$. Therefore, $\gamma_{\rm sp}(K_{1,r}\Box K_{1,r'}) = rr'+1$.
\end{proof}

\section*{Acknowledgements}
This research was supported in part by  the Spanish  government  under  the grants MTM2016-78227-C2-1-P and PRX17/00102.

\bibliographystyle{elsart-num-sort}


\end{document}